\algrenewcommand\algorithmicrequire{\textbf{Input:}}
\algrenewcommand\algorithmicensure{\textbf{Output:}}
\algnewcommand{\algorithmicand}{\textbf{ and }}
\algnewcommand{\algorithmicor}{\textbf{ or }}
\algnewcommand{\OR}{\algorithmicor}
\algnewcommand{\AND}{\algorithmicand}
\newcommand{\algorithmicbreak}{\textbf{break}}
\newtheorem{theorem}{Theorem}[section]
\newtheorem{corollary}[theorem]{Corollary}
\newtheorem{definition}[theorem]{Definition}
\newtheorem{remark}[theorem]{Remark}
\newtheorem{lemma}[theorem]{Lemma}
\newtheorem{example}[theorem]{Example}
\pgfplotsset{compat=1.14} 
\journal{}
\begin{document}

\begin{frontmatter}



\title{On the Universal Near Shortest Simple Paths Problem}

\renewcommand*{\thefootnote}{\fnsymbol{footnote}}
\author{Luca E. Schäfer\footnote{Corresponding author\\Email addresses: luca.schaefer@mathematik.uni-kl.de (Luca E. Schäfer), maier@mathematik.uni-kl.de (Andrea Maier), ruzika@mathematik.uni-kl.de (Stefan Ruzika)}, Andrea Maier, Stefan Ruzika}

\address{Department of Mathematics, Technische Universität Kaiserslautern, 67663 Kaiserslautern, Germany}

\begin{abstract}
This article generalizes the Near Shortest Paths Problem introduced by Byers and Waterman in 1984 using concepts of the Universal Shortest Path Problem  established by Turner and Hamacher in 2011. The generalization covers a variety of shortest path problems by introducing a universal weight vector. We apply this concept to the Near Shortest Paths Problem in a way that we are able to enumerate all universal near shortest simple paths.
We present two recursive algorithms to compute the set of universal near shortest simple paths between two prespecified vertices and evaluate the running time complexity per path enumerated with respect to different values of the universal weight vector. Further, we study the cardinality of a minimal complete set with respect to different values of the universal weight vector. 
\end{abstract}



\begin{keyword}
Near shortest Paths \sep Dynamic Programming \sep Universal Shortest Path \sep Complexity \sep Enumeration


\end{keyword}

\end{frontmatter}


\section{Introduction}
The \(K\)-shortest path problem is a well-studied generalization of the shortest path problem, where one aims to determine not only the shortest path, but also the \(K\) shortest paths ($K >1$)  between a source vertex \(s\) and a sink vertex \(t\). In contrast to the \(K\)-shortest path problem and its applications, (cf. \citep{aljazzar2011k,epp,christ,yen}), the near shortest paths problem, i.e., \textsf{NSPP}, received fewer attention in the literature, (cf. \citep{BW,CW,BW1}).
The \textsf{NSPP} aims to enumerate all paths whose lengths are within a factor of \(1 + \varepsilon\) of the shortest path length between source and sink for some \(\varepsilon > 0\). In \citep{BW}, a dynamic programming algorithm for enumerating all near shortest paths in a directed network is presented.
In \citep{CW}, \textsf{NSPP} is further elaborated and an algorithm for computing the set of near shortest simple paths is displayed. The authors apply \textsf{NSPP} to the \(K\)-shortest path problem and present computational results on grid and road networks.

On the contrary, the shortest path problem is one of the best known and most studied optimization problems in operations research with several applications, (cf. \citep{cher,conde2018minmax,gallo,taccari2016integer}). One can find several variations of shortest path problems such as the bottleneck, the \(k\)-max and the \(k\)-sum shortest path problem, etc., where one aims to minimize the largest, the \(k\)-th largest and the sum of the \(k\) largest arc costs among the set of feasible paths from source to sink, respectively, (cf. \citep{garfinkel2006thek,ruzika,bsp1}).

In \citep{turner2012variants} and \citep{turner2011universal}, the authors generalize the shortest path problem by introducing a universal weight vector \(\lambda\) to the objective function such that a variety of classical shortest path problems can be covered by this formulation. The problem is called the universal shortest path problem, i.e., \textsf{USPP}. A sequential definition, where one has to solve \(|V|-1\) subproblems with fixed cardinality, as well as a definition with cardinality \(|A|\) are proposed, where \(|V|\) and \(|A|\) refer to the number of vertices and arcs of a graph \(G=(V,A)\), respectively.
\newline

In this paper, we generalize \textsf{NSPP} to the universal near shortest simple paths problem, called \textsf{UNSSPP}. We show, that by using this formulation, the idea of \textsf{NSPP}, mentioned in \citep{BW}, can be applied to a variety of well-studied shortest path problems. In this context, we present two recursive algorithms to compute the set of universal near shortest simple paths. Our fastest algorithm (when applied to the classical near shortest paths problem) has the same running time complexity per path enumerated than the algorithm presented in \cite{CW} with the addition that it can be applied to almost any shortest path problem. Further, we prove that the amount of work per path enumerated is polynomially bounded, provided that the corresponding shortest path problem with respect to the value of the universal weight vector \(\lambda\) can be solved in polynomial time. Afterwards, we study the size of a minimal complete set with respect to different values of \(\lambda\). To the best of our knowledge, this problem has not been investigated in the literature so far.
\newline

The remainder of this article is structured as follows.
In Section \ref{sec:notation}, we state the notation used throughout this paper and explain some graph theoretical aspects. In Section \ref{sec:USPP}, we introduce the universal near shortest simple paths problem, called \textsf{UNSSPP}.
Section \ref{sec:IPFormulations} deals with the size of a minimal complete set of the universal near shortest simple paths problem with respect to different values of the universal weight vector \(\lambda\). 
Section \ref{sec:conclusion} summarizes and concludes the paper.

\section{Preliminaries and Notation}\label{sec:notation}
The algorithms presented in this paper rely on basics of graph theory and network optimization. Therefore, we briefly recall the most important definitions in this section.
Let \(G = (V,A)\) be a directed graph with vertex set $V$, arc set $A$ and let \(c: A \rightarrow \mathbb{Z}_+\) be a cost function over the set of arcs. We set the number of vertices \(|V|\) to \(n\) and the number of arcs \(|A|\) to \(m\). Further, we assume a source vertex \(s \in V\) and a target vertex \(t\neq s \in V\) to be given. A directed path \(P\)  from \(s\) to \(t\) is defined by a sequence of adjacent arcs in \(G\), i.e., \(P = (a_1,\ldots,a_k)\), where \(a_1\) is an outgoing arc of \(s\) and \(a_k\) is an ingoing arc of \(t\). Consequently, the total cost of a path \(P\) can be computed as the sum of all arc-costs on this path, i.e., \(c(P) \coloneqq \sum_{i = 1}^{k} c(a_i)\). If we aim to refer to the vertices and arcs of a path \(P\), we write \(V(P)\) and \(A(P)\), respectively. With \(l(P)\) we denote the length of path \(P\) with respect to the number of arcs on that path, i.e., \(l(P)\coloneqq |\{a \in A(P)\}|\). For the remainder of this article, we are interested in simple paths. A path \(P\) is called simple, if it contains no repeated vertices, i.e., \(l(P) \leq n-1\).  
With \(\mathcal{P}_{uv}\), we denote the set of all simple \(u\)-\(v\)-paths in \(G\).
Further, \(\delta_G^+(u)\) and \(\delta_G^-(u)\) represent the set of outgoing and incoming arcs of vertex \(u \in V\) in \(G\), respectively. If the underlying graph is clear from the context, we just write \(\delta^+(u)\) and \(\delta^-(u)\). For a path \(P\), we define \(\text{pred}_P(u)\) to be the predecessor vertex of \(u \in V(P)\) on path \(P\). Further, for some given arc set \(F \subseteq A\), we denote by \(G\setminus F\) the graph obtained from \(G\) by removing the arcs in \(F\).

Dijkstra's algorithm, cf. \citep{Ahuja}, can be used to compute the shortest path from the source vertex \(s\) to all other vertices in the graph \(G\). Using Fibonacci heaps, the algorithm has a worst case time complexity of \(\mathcal{O}(m + n \log n)\), see \citep{FIB}. The shortest path distance from \(v \in V\) to the target vertex \(t\) is denoted by \(d(v,t)\) for all \(v \in V\). Note that we have to solve only one single shortest path problem to obtain all shortest path distances for all \(v \in V\) to \(t\) by executing Dijkstra's algorithm on the inverse graph of \(G\), i.e., the graph obtained from \(G\) by reversing all arcs.

The near shortest paths problem aims to enumerate all those \(s\)-\(t\)-paths \(P\) with total cost smaller than a given bound \(B \in \mathbb{Z}_+\), i.e., \(c(P) \leq B\). Usually, \(B\) is set to be equal to \((1+\varepsilon)\cdot d(s,t)\) for some given \(\varepsilon > 0\) such that we can enumerate all paths from \(s\) to \(t\) whose total costs are within a factor of \(1+\varepsilon\) of the smallest path cost.

Algorithms for enumerating near shortest paths in general and near shortest simple paths are depicted in \citep{BW} and \citep{CW}, respectively.

In \citep{BW}, a dynamic programming algorithm for computing all near shortest paths between two vertices is proposed. The algorithm works as follows:

First, all values of \(d(v,t)\) are computed for all \(v \in V\). Then, a current \(s\)-\(v\)-path is extended via an arc \((v,w)\) if and only if \(\text{D} + c(v,w) + d(w,t) \leq B\), where D descibes the length of the current \(s\)-\(v\)-path. Whenever a path from \(s\) to \(t\) is found by applying the above described procedure, we output this path.

\section{Universal Near Shortest Simple Paths}\label{sec:USPP}
In \citep{turner2012variants} and \citep{turner2011universal}, the authors introduce the Universal Shortest Path Problem, i.e., \textsf{USPP} in two different variations, i.e., a sequential definition of \textsf{USPP} and a definition of \textsf{USPP} with cardinality \(|A|\). We focus on the latter definition, since the former definition requires to solve \(|V|-1\) universal subproblems with fixed cardinality, which cannot directly be applied to \textsf{NSPP}. Due to the fact that we restrict ourselves to simple paths, we consider \textsf{USPP} with cardinality \(|V|-1\), since a simple path uses at most \(|V|-1\) arcs.
The following definitions can be found in a slightly modified manner in \citep{turner2011universal}.
\begin{definition}[Extended sorted cost vector]
	Let \(P\) be a path in \(G\). Then, the extended sorted cost vector \(c_{\geq}(P) \in \mathbb{Z}_+^{n-1}\) for \(P\) is given by
	\begin{equation*}
	c_{\geq}(P) \coloneqq (c_{(1)}(P),\ldots,c_{(l(P))}(P),\underbrace{0,\ldots,0}_{n-1-l(P)})
	\end{equation*}
	where \(c_{(i)}(P), i = 1,\ldots,l(P)\) is the \(i\)-th largest arc cost in \(P\). Thus, \(c_{(1)}(P)\geq\dots\geq c_{(l(P))}(P) \geq 0\) and \(c_{(i)}(P) \coloneqq 0\) for \(i = l(P)+1,\ldots,n-1\) if \(l(P)< n-1\).
\end{definition}
\begin{definition}[Universal Shortest Path Problem]
	The Universal Shortest Path Problem in \(G\) with cardinality \(n-1\) and a universal weight vector \(\lambda \in \mathbb{Z}^{n-1}\), called \textsf{USPP(\(G,\lambda\))}, is defined as follows
	\begin{equation*}\tag{\textsf{USPP(\(G,\lambda\))}}
	\min_{P \in \mathcal{P}_{st}} f_{\lambda}(P) \coloneqq \sum_{i=1}^{n-1} \lambda_i c_{(i)}(P).
	\end{equation*}
	A path \(P^* \in \mathcal{P}_{st}\) that is optimal for \textsf{USPP(\(G,\lambda\))} is called universal shortest path with universal objective function value \(f_{\lambda}(P^*)\). Further, \(f^*_{\lambda}(u,v)\) denotes the universal optimal objective function value of the universal shortest \(u\)-\(v\)-path.
\end{definition}
It can be shown that \textsf{USPP(\(G,\lambda\))} is in general \(\mathcal{NP}\)-complete by setting \(\lambda_i = -1\) for \(i=1,\ldots,n-1\) and reducing the longest path problem, which is known to be \(\mathcal{NP}\)-complete, to \textsf{USPP(\(G,\lambda\))}, cf. \citep{turner2011universal}. 

Table \ref{tab:universal} reveals a selection of some well-studied objective functions that can be modeled by \textsf{USPP(\(G,\lambda\))}. Note that with the sequential definition of \textsf{USPP} a larger variety of shortest path problems can be modelled, cf. \citep{turner2011universal}. For example, the balanced shortest path problem (see e.g. \citep{martello,punnen,turner2011generalized}), where one aims to minimize the difference between the largest and smallest value of a feasible solution, cannot be modeled by \textsf{USPP(\(G,\lambda\))}. In the following, we assume that \(1 \leq k \leq n-1\). However, for the case of \(k=1\), the bottleneck shortest path problem coincides with the \(k\)-max and \(k\)-sum shortest path problem.
\begin{table}[htbp]
	\centering
	\begin{tabularx}{\textwidth}{p{0.25\linewidth}|p{0.15\linewidth}|X|X} \hline
		\(\lambda \in \mathbb{Z}^{n-1}\) & \(f_{\lambda}(P)\) & Optimization problem & Runtime\\ \hline 
		$(1,\ldots,1)$ & $\sum_{a \in P}c(a)$ & Sum Shortest Path Problem (SSP) & \(\mathcal{O}(m+n\log n)\), cf. \citep{FIB}\\
		$(1,0,\ldots,0)$ & $\max_{a \in P}c(a)$ & Bottleneck Shortest Path Problem (BSP) & \(\mathcal{O}(m\log\log m)\), cf. \citep{bsp1}\\
		$(\underbrace{1,\ldots,1}_k,0,\ldots,0)$ & $\sum_{i=1}^k c_{(i)}(P)$ & \(k\)-Sum Shortest Path Problem (kSSP) & \(\mathcal{O}(n^2m^2)\), cf. \citep{garfinkel2006thek}\\
		$(\underbrace{0,\ldots,0}_{k-1},1,0,\ldots,0)$ & $c_{(k)}(P)$ & \(k\)-Max Shortest Path Problem (kMSP) & \(\mathcal{O}((m+n\log n)\cdot \log n)\), cf. \citep{ruzika}\\
		\hline 
	\end{tabularx}
	\caption{Objective functions modeled by \textsf{(USPP(\(\lambda\)))}}
	\label{tab:universal}
\end{table}

To the best of our knowledge, the Universal Near Shortest Simple Paths Problem in \(G\), called 
\textsf{UNSSPP(\(G,\lambda\))}, has not been investigated in the literature. 
\textsf{UNSSPP(\(G,\lambda\))} aims to enumerate all simple \(s\)-\(t\)-paths \(P\), where the universal objective function value \(f_{\lambda}(P)\) is within a factor of \(1+\varepsilon\) of the universal optimal objective function value \(f^*_{\lambda}(s,t)\). 
Note that for \(\lambda = (1,\ldots,1)\), \textsf{UNSSPP(\(G,\lambda\))} reduces to the classical near shortest paths problem as depicted in \citep{CW}. 

In \citep{garfinkel2006thek}, the authors showed that the \(k\)-Sum Shortest Path Problem (kSSP), also called the \(k\)-Centrum Shortest Path Problem, i.e., $\lambda = (\underbrace{1,\ldots,1}_k,0,\ldots,0)$, is in general \(\mathcal{NP}\)-hard, which follows directly by a reduction from the shortest hamiltonian chain between two vertices. In (kSSP), one aims to minimize the sum of the \(k\) arcs with highest cost in any simple \(s\)-\(t\)-path.  They proposed a polynomial time algorithm for graphs with positive arc costs and \(k \leq \min\{l(P) : P \in \mathcal{P}_{st}\}\).
Further, a polynomial time extension for general \(k\) is discussed, if there does not exist a cycle of negative cost in the graph, cf. \citep{garfinkel2006thek}. In \citep{bsp1}, a \(\mathcal{O}(m\log\log m)\) algorithm to compute the bottleneck shortest path (minimize the arc with highest cost in any \(s\)-\(t\)-path) is presented. In \citep{ruzika}, the authors discuss a generalization of bottleneck optimization problems, where one aims to minimize the \(k\)-th largest cost coefficient among the feasible solutions of a combinatorial optimization problem. A bisection algorithm is displayed, which in case of the \(k\)-Max Shortest Path Problem runs in \(\mathcal{O}((m+n\log n)\cdot \log n)\).
\newline

Before considering \textsf{UNSSPP(\(G,\lambda\))}, we show that the complexity of finding the \enquote{next} universal shortest path with respect to a given path \(P'\) and its corresponding universal objective function value \(f_{\lambda}(P')\), depends on the given value of \(\lambda\).

Therefore, let \(\lambda \in \mathbb{Z}^{n-1}\) and let \(P' \in \mathcal{P}_{st}\) be an arbitrary \(s\)-\(t\)-path with \(f_{\lambda}(P')=\mu\). Consider the following optimization problem of finding the \enquote{next} universal shortest path with respect to \(P'\), called \textsf{NextUSP}.
\begin{mini*}
	{P \in \mathcal{P}_{st}}{f_{\lambda}(P)}{}{}	
	\addConstraint{f_{\lambda}(P)}{\geq \mu+1 = f_{\lambda}(P')+1}{}
\end{mini*}

The decision version of \textsf{NextUSP} is as follows:
\newline

\textbf{\textsf{NextUSP}\((G,\lambda,P',\psi)\)}. Given a directed graph \(G=(V,A)\), \(\lambda \in \mathbb{Z}^{n-1}\), two distinct vertices \(s, t \in V\), a path \(P' \in \mathcal{P}_{st}\) with \(\mu \coloneqq f_{\lambda}(P')\) and a value \(\psi \in \mathbb{Z}\) with \(\psi > \mu\). Decide whether there exists a path \(P \in \mathcal{P}_{st}\) such that \(f_{\lambda}(P) \geq \mu+1\) and \(f_{\lambda}(P)\leq \psi\).
\newline

In the following, we analyze the complexity of \textsf{NextUSP} with respect to different values of \(\lambda\). Note that Theorem \ref{thm:NextUSP} has recently been proven in a different context in \citep{goerigk2018ranking}.
\begin{theorem}\label{thm:NextUSP}
	For \(\lambda = (1,\ldots,1)\), the problem  \textsf{NextUSP} is \(\mathcal{NP}\)-complete.
\end{theorem}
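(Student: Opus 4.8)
The plan is to establish membership in \(\mathcal{NP}\) and \(\mathcal{NP}\)-hardness separately. For membership, note first that for \(\lambda=(1,\ldots,1)\) the universal objective collapses to the ordinary path cost: since \(c_{(i)}(P)=0\) for \(i>l(P)\), one has \(f_{\lambda}(P)=\sum_{i=1}^{n-1}c_{(i)}(P)=\sum_{a\in A(P)}c(a)=c(P)\). Hence \(\mu=f_{\lambda}(P')=c(P')\) is computable in polynomial time from the input, and a simple \(s\)-\(t\)-path \(P\) serves as a polynomial-size certificate: one checks in polynomial time that \(P\) is a simple \(s\)-\(t\)-path and that \(\mu+1\le c(P)\le\psi\). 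So \textsf{NextUSP} with this \(\lambda\) lies in \(\mathcal{NP}\).

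For hardness I would reduce from the problem of deciding whether a directed graph \(H\) on \(N\ge 4\) vertices contains a Hamiltonian path between two prescribed vertices \(s_H,t_H\), which is \(\mathcal{NP}\)-complete (instances with \(N\le 3\) are decidable in constant time and can be hard-wired). Construct \(G\) from \(H\) by putting cost \(1\) on every arc of \(H\) and then attaching a \enquote{bypass}, i.e.\ fresh vertices \(w_1,\ldots,w_{N-3}\) together with the arcs \(s_H\to w_1\to\cdots\to w_{N-3}\to t_H\), again all of cost \(1\). Let \(P'\) be the bypass path and set \(\mu\coloneqq c(P')\) (so \(\mu=N-2\)) and \(\psi\coloneqq N-1\); then \(\psi>\mu\), all arc costs are positive, and \(G,P',\psi\) are obtained in polynomial time. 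Because a simple \(s_H\)-\(t_H\)-path has at most \(N-1\) arcs, every simple \(s_H\)-\(t_H\)-path lying inside the copy of \(H\) has cost at most \(N-1\), with equality precisely for the Hamiltonian ones.

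The key structural point is that in \(G\) each \(w_i\) has in-degree and out-degree exactly one, its only neighbours being \(w_{i-1}\) and \(w_{i+1}\) with the convention \(w_0\coloneqq s_H\) and \(w_{N-2}\coloneqq t_H\). Hence any simple \(s_H\)-\(t_H\)-path that contains some \(w_i\) is forced, vertex by vertex, to contain the entire directed line through the \(w_j\) and therefore equals \(P'\); no simple \(s_H\)-\(t_H\)-path can combine a bypass vertex with a vertex of \(H\). Consequently \textsf{NextUSP}\((G,(1,\ldots,1),P',\psi)\) answers YES iff \(G\) contains a simple \(s_H\)-\(t_H\)-path of cost in \([\mu+1,\psi]=\{N-1\}\); such a path is neither \(P'\) (cost \(N-2\)) nor uses any \(w_i\), hence lies in the copy of \(H\) and has cost \(N-1\), i.e.\ is Hamiltonian, and conversely a Hamiltonian \(s_H\)-\(t_H\)-path of \(H\) is exactly such a path in \(G\). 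Together with membership in \(\mathcal{NP}\) this gives completeness. I expect the logical equivalence itself to be routine; the one point deserving care — and the reason for the bypass gadget — is that a legal \textsf{NextUSP} instance must supply an explicit path \(P'\) of cost \emph{exactly} \(\mu\), which \(H\) on its own need not provide, and one has to verify that the degree-one property really pins any bypass-touching path down to \(P'\), so that the gadget creates no spurious simple path of cost \(N-1\).
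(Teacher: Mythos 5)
Your proof is correct, but it takes a genuinely different route from the paper. The membership argument is the same (for \(\lambda=(1,\ldots,1)\) the universal objective of a simple path is just its cost). For hardness, however, the paper reduces from the \emph{longest path} problem with general non-negative integer costs: given the threshold \(L\), it simply adds one extra arc \(e=(s,t)\) of cost \(L-1\), takes \(P'=(e)\) so that \(\mu=L-1\), and sets \(\psi\) so large that the upper constraint is vacuous; the only new path created is \(P'\) itself, so a path of value at least \(\mu+1=L\) exists iff \(G\) has a path of cost at least \(L\). You instead reduce from directed Hamiltonian path with unit costs, manufacturing \(P'\) via a bypass chain of fresh degree-one vertices and pinning the feasible window down to the single value \(\{N-1\}\). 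Both reductions are sound. The paper's gadget is shorter and avoids the degree-counting argument you need to rule out paths mixing bypass and \(H\)-vertices; your version buys a slightly stronger conclusion (\(\mathcal{NP}\)-completeness already for unit arc costs, and with the upper bound \(\psi\) tight at \(\mu+1\) rather than effectively absent), at the price of a larger gadget and the extra case analysis for small \(N\).
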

\begin{proof}
	The decision version of the problem is clearly in \(\mathcal{NP}\), since given a path \(P \in \mathcal{P}_{st}\) one can check in polynomial time whether \(f_{\lambda}(P')+1 \leq f_{\lambda}(P) = c(P) \leq \psi\). To show that the decision version of this problem is \(\mathcal{NP}\)-complete, we conduct a polynomial time reduction from the longest path problem, which is known to be \(\mathcal{NP}\)-complete, cf. \citep{garey2002computers}.
	
	We reduce the longest path problem to \textsf{NextUSP} as follows.
	Given an instance \(G=(V,A),\ s,t \in V,\ c:A \rightarrow \mathbb{Z}_+ \text{ and }\ L \in \mathbb{Z}_+\) of the longest path problem (does there exist a path \(P\) from \(s\) to \(t\) with \(c(P)\geq L\)), we construct an instance of \textsf{NextUSP} as follows:
	Let \(G' = (V,A')\) with \(A' \coloneqq A \cup \{e\}\), where \(e = (s,t)\). Further, let \(c':A' \rightarrow \mathbb{Z}_+\), where \(c'(a) = c(a)\) for all \(a \in A\) and \(c'(e) = \mu := L-1\). The path \(P'\) solely consists of arc \(e\), i.e., \(P'=(e)\), and consequently, \(\mu=L-1\).
	Moreover, let \(\psi \coloneqq L + c_{max} \cdot|V|\) be an upper bound  on all possible path lengths, where \(c_{max} \coloneqq \max \{c(a):a\in A\}\).
	
	Now, there exists a path \(P\) in \(G'\) with \(\psi \geq f_{\lambda}(P) = \sum_{i=1}^{n-1}c'_{(i)}(P)\geq \mu+1\) if and only if there exists a path \(P\) in \(G\) with \(c(P) \geq L\). This reduction can be done in polynomial time, which concludes the proof.
\end{proof}

\begin{theorem}
	For \(\lambda = (-1,\ldots,-1)\), the problem \textsf{NextUSP} is \(\mathcal{NP}\)-complete.
\end{theorem}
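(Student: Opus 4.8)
The plan is to first simplify the objective. For $\lambda = (-1,\ldots,-1)$ the padding zeros of the extended sorted cost vector contribute nothing, so $f_{\lambda}(P) = -\sum_{i=1}^{l(P)} c_{(i)}(P) = -c(P)$ for every simple path $P$. Hence \textsf{USPP(\(G,\lambda\))} is just the longest simple $s$-$t$-path problem, and \textsf{NextUSP} asks, given a path $P'$ with $\mu = f_{\lambda}(P') = -c(P')$, whether some simple $s$-$t$-path $P$ satisfies $\mu+1 \le -c(P) \le \psi$. Membership in $\mathcal{NP}$ is immediate: given a candidate $P$ one checks simplicity and the two inequalities $\mu+1 \le f_{\lambda}(P) \le \psi$ in polynomial time.

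For $\mathcal{NP}$-hardness I would reduce from the decision version of the longest path problem --- does $G$ contain a simple $s$-$t$-path $P$ with $c(P) \ge L$? --- which is $\mathcal{NP}$-complete \citep{garey2002computers}. The construction mirrors the proof of Theorem~\ref{thm:NextUSP}, but with the threshold carried by $\psi$ rather than by $\mu$, since now a larger cost yields a \emph{smaller} objective value. Given $G = (V,A)$, $s,t$, $c \colon A \to \mathbb{Z}_+$ and $L$, I may assume $L \le c_{max}(n-1)$ with $c_{max} = \max\{c(a) : a \in A\}$, since otherwise the instance is trivially negative. Let $G' = (V, A \cup \{e\})$ with $e = (s,t)$, set $c'(a) = c(a)$ for $a \in A$ and $c'(e) := c_{max}(n-1)+1$ (any weight strictly larger than the cost of every simple $s$-$t$-path of $G$ will do), let $P' = (e)$ so that $\mu = f_{\lambda}(P') = -c'(e)$, and put $\psi := -L$. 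Then $\psi > \mu$ holds because $L \le c_{max}(n-1) < c'(e)$, so this is a legitimate \textsf{NextUSP} instance, and it is clearly constructible in polynomial time.

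Correctness rests on two points. First, since we only consider simple paths, $P'$ is the only simple $s$-$t$-path of $G'$ that uses the arc $e$; every other simple $s$-$t$-path of $G'$ is a simple $s$-$t$-path of $G$, and therefore has cost at most $c_{max}(n-1) < c'(e)$, which forces $f_{\lambda}(P) = -c(P) \ge -c'(e)+1 = \mu+1$. Thus the inherited constraint $f_{\lambda}(P) \ge \mu+1$ is vacuous on all paths coming from $G$ and only serves to exclude $P'$ itself. Second, for such a path $P$ we have $f_{\lambda}(P) \le \psi = -L$ if and only if $c(P) \ge L$. Combining the two, the \textsf{NextUSP} instance is positive exactly when $G$ admits a simple $s$-$t$-path of cost at least $L$, i.e.\ exactly when the longest path instance is positive, which completes the reduction.

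I do not expect a genuine obstacle: the result is close in spirit to the $\mathcal{NP}$-completeness of \textsf{USPP(\(G,\lambda\))} for $\lambda = (-1,\ldots,-1)$ already mentioned in the text. The only delicate point is handling the constraint $f_{\lambda}(P) \ge \mu+1$ that \textsf{NextUSP} carries from its definition --- it must be rendered non-binding for the paths of $G$ while the requirement $\psi > \mu$ is still respected --- and this is exactly what the oversized cost on the auxiliary arc $e$, combined with $\psi = -L$, achieves.
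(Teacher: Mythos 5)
Your proof is correct and follows the same route the paper intends: a reduction from the longest path problem, here worked out in full with the auxiliary arc $e=(s,t)$ making the constraint $f_{\lambda}(P)\geq\mu+1$ non-binding and $\psi=-L$ carrying the length threshold. The paper itself only states that the result ``follows immediately'' from the $\mathcal{NP}$-completeness of the longest path problem, so your argument is a correct and more detailed version of the intended one-line proof.
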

\begin{proof}
	This follows immediately from the fact that the longest path problem is \(\mathcal{NP}\)-complete, cf. \citep{garey2002computers}.
\end{proof}

\begin{corollary}\label{thm:NextUSPI}
	For $\lambda = (\underbrace{1,\ldots,1}_k,0,\ldots,0)$ with \(k \in \mathbb{Z}_+ \text{ and } k\leq n-1\), the problem \textsf{NextUSP} is \(\mathcal{NP}\)-complete.
\end{corollary}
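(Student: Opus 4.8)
The plan is to follow the template of Theorem~\ref{thm:NextUSP}. Membership in $\mathcal{NP}$ is immediate: given a simple $s$-$t$-path $P$, one sorts its at most $n-1$ arc costs, adds up the $k$ largest among them to obtain $f_\lambda(P)=\sum_{i=1}^{k}c_{(i)}(P)$, and checks $\mu+1\le f_\lambda(P)\le\psi$, all in polynomial time. For $\mathcal{NP}$-hardness, I would observe that $k=n-1$ is an admissible value of the parameter, and for that choice $\lambda=(\underbrace{1,\dots,1}_{n-1})=(1,\dots,1)$, so \textsf{NextUSP} with this weight vector is exactly the problem shown to be $\mathcal{NP}$-complete in Theorem~\ref{thm:NextUSP}. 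Hence \textsf{NextUSP} restricted to weight vectors of the stated shape is $\mathcal{NP}$-complete, which proves the corollary.

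If one additionally wants hardness for a \emph{prescribed} $k<n-1$, the gadget of Theorem~\ref{thm:NextUSP} does not carry over: since $f_\lambda$ records only the $k$ largest arc costs of a path and discards the rest, an instance whose relevant paths are long cannot encode their total cost. I would instead reduce from a problem that is insensitive to the discarded cheap arcs, e.g. the directed two vertex-disjoint paths problem on a digraph $D$ with terminals $s_1,t_1,s_2,t_2$, which is $\mathcal{NP}$-complete. From such an instance, build $G'$ by giving every arc of $D$ cost $1$, inserting a \enquote{bundle} of $k$ consecutive arcs $t_1\to y_1\to\cdots\to y_{k-1}\to s_2$ of cost $2$ on fresh vertices, and adding a direct arc $e=(s_1,t_2)$ of cost $k$; put $s\coloneqq s_1$, $t\coloneqq t_2$, $P'\coloneqq(e)$ so that $\mu=f_\lambda(P')=k$, and $\psi\coloneqq 2k$. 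Every simple $s$-$t$-path in $G'$ is then either $(e)$, with value $k$; or a path staying inside $D$, with value $\min(k,l(P))\le k$; or a path of the form \enquote{$s_1$ to $t_1$ in $D$, bundle, $s_2$ to $t_2$ in $D$}, with value $2k$. Consequently a path with $\mu+1\le f_\lambda(P)\le\psi$ exists if and only if $D$ contains the two vertex-disjoint paths.

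The main obstacle, in this stronger reading, is precisely the mismatch between $f_\lambda$ and total cost for $k<n-1$, which forces the change of source problem. The steps I expect to require the most care are verifying that the bundle can be entered only at $t_1$ and left only at $s_2$ (each $y_i$ has a unique incoming and outgoing arc), and that simplicity of the composite path is equivalent to vertex-disjointness of its two $D$-pieces; the case $k=1$, in which the bundle degenerates to a single heavy arc, then reduces to the familiar \enquote{does some simple $s$-$t$-path use a prescribed arc} problem. For the corollary exactly as stated, however, the one-line reduction from Theorem~\ref{thm:NextUSP} with $k=n-1$ already suffices.
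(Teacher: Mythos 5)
Your first paragraph is exactly the paper's proof: the corollary is obtained by instantiating $k:=n-1$, whereupon $\lambda=(1,\ldots,1)$ and Theorem~\ref{thm:NextUSP} applies verbatim, so the proposal is correct and takes the same approach. The additional reduction you sketch for a prescribed $k<n-1$ goes beyond what the paper claims or proves and is not needed for the statement as written.
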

\begin{proof}
	Follows immediately from Theorem \ref{thm:NextUSP} for \(k := n-1\).
\end{proof}
Note that \(\mathcal{NP}\)-completeness also holds true for a generalized version of the \(k\)-sum objective.
\begin{corollary}
	For $\lambda = (\underbrace{0,\ldots,0}_j,\underbrace{1,\ldots,1}_k,\underbrace{0,\ldots,0}_l)$ with \(j, k, l \in \mathbb{Z}_+ \text{ and } j+k+l=n-1\), the problem \textsf{NextUSP} is \(\mathcal{NP}\)-complete.
\end{corollary}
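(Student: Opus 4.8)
The plan is a polynomial-time reduction from \textsf{NextUSP} with $\lambda=(1,\ldots,1)$, which is \(\mathcal{NP}\)-complete by Theorem \ref{thm:NextUSP}. Membership in \(\mathcal{NP}\) is immediate: for a candidate path $P$ one sorts its arc costs and adds the entries in positions $j+1,\dots,j+k$ in polynomial time, so the condition $\mu+1\le f_\lambda(P)\le\psi$ is verifiable efficiently.

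For hardness, I would start from an instance \textsf{NextUSP}\((G,(1,\ldots,1),P',\psi)\) with $G=(V,A)$, $|V|=N$, cost function $c$, terminals $s,t$, reference path $P'\in\mathcal{P}_{st}$ (so $\mu=c(P')$) and bound $\psi>\mu$, and build $G'=(V',A')$ as follows. Prepend a directed path $s'=w_0\to w_1\to\dots\to w_j=s$ consisting of $j$ new arcs, each of cost $M:=1+\max\{c(a):a\in A\}$, with $w_1,\dots,w_{j-1},s'$ new vertices; and append a directed path $t=x_0\to x_1\to\dots\to x_l=t'$ consisting of $l$ new arcs, each of cost $0$, with $x_1,\dots,x_l$ new vertices; the costs on $A$ are kept. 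Then $|V'|=N+j+l=:n'$, and putting $k:=N-1$ makes $\lambda':=(0^j,1^k,0^l)$ a vector in $\mathbb{Z}^{n'-1}$ of exactly the required shape. Finally let $P'_{\mathrm{new}}$ be the concatenation of the new prefix path, $P'$, and the new suffix path, and keep $\psi$. All of this is computable in polynomial time.

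The core of the argument is that every $P\in\mathcal{P}_{s't'}$ in $G'$ decomposes uniquely as (prefix path)--$Q$--(suffix path) with $Q\in\mathcal{P}_{st}$ in $G$ — because $s'$ and the $w_i$ have out-degree one while $t'$ and the $x_i$ have in-degree one — and that $f_{\lambda'}(P)=c(Q)$ for every such $P$. The latter holds because $M$ strictly exceeds every cost in $G$, so the $j$ prefix arcs occupy exactly the first $j$ positions of the extended sorted cost vector of $P$, i.e.\ the leading $0$-block of $\lambda'$; and because $Q$ is simple in $G$ it uses at most $N-1=k$ arcs, so all of its costs land among positions $j+1,\dots,j+k$ (the $1$-block), while the $l$ zero-cost suffix arcs together with the padding zeros fill the trailing $0$-block. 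In particular $f_{\lambda'}(P'_{\mathrm{new}})=c(P')=\mu$, so $P'_{\mathrm{new}}$ is a legitimate reference path with the same value. Hence there is a path $P\in\mathcal{P}_{s't'}$ with $\mu+1\le f_{\lambda'}(P)\le\psi$ if and only if there is a path $Q\in\mathcal{P}_{st}$ with $\mu+1\le c(Q)\le\psi$, which is exactly the original question; this establishes correctness of the reduction.

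I expect the only real difficulty to be the bookkeeping behind that last identity — verifying that the heavy prefix arcs always fall into the leading $0$-block and the cheap suffix arcs plus padding always fall into the trailing $0$-block, regardless of the length of $Q$ — which is precisely what dictates the two design choices $M>\max_{a}c(a)$ and $k=N-1$. An alternative would be to place the same two gadgets directly on top of a longest path instance, mirroring the proof of Theorem \ref{thm:NextUSP}; reducing from \textsf{NextUSP} with $\lambda=(1,\ldots,1)$ is slightly more convenient since it supplies the reference path $P'$ and the bound $\psi$ for free.
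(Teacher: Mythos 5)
Your proof is correct, but it takes a genuinely different route from the paper. The paper's own proof is a one-line observation: the stated family of weight vectors contains $\lambda=(1,\ldots,1)$ as the special case $j=l=0$, $k=n-1$, so \(\mathcal{NP}\)-completeness follows immediately from Theorem~\ref{thm:NextUSP}; nothing further is argued. You instead build an explicit padding reduction that realizes nontrivial leading and trailing zero blocks: $j$ prefix arcs of cost $M>\max_a c(a)$ that are guaranteed to occupy the first $j$ positions of the extended sorted cost vector, $l$ zero-cost suffix arcs that (together with the padding zeros) fall into the trailing block, and the choice $k=N-1$ so that every simple $s$-$t$-subpath $Q$ has all its arc costs absorbed by the $1$-block, giving $f_{\lambda'}(P)=c(Q)$. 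Your bookkeeping is sound — in particular the potential interleaving of zero-cost arcs of $Q$ with the suffix zeros is harmless because the sum over positions $j+1,\ldots,j+k$ is unaffected by how ties among zeros are broken, and the in-degree-one chain into $t'$ and out-degree-one chain out of $s'$ do force the claimed decomposition. What each approach buys: the paper's argument is minimal and suffices under the reading that $\lambda$ of the given shape is part of the input (so exhibiting one hard subfamily is enough), while your reduction additionally shows that hardness persists for instances with genuinely nonzero $j$ and $l$, i.e., it is not an artifact of the degenerate case $j=l=0$. Note only that your construction ties $k$ to the size of the source instance, so it does not (and cannot, by Theorem~\ref{thm:fixed}) establish hardness for fixed constant $k$; this is consistent with the corollary but worth stating explicitly.
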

\begin{proof}
	Again, this follows from Theorem \ref{thm:NextUSP} with \(j=l=0\) and \(k = n-1\).
\end{proof}

\begin{theorem}\label{thm:fixed}
	For $\lambda = (\underbrace{1,\ldots,1}_k,0,\ldots,0)$ and \(k \in \mathbb{Z}_+ \text{ with } k\leq n-1\) fixed, the problem \textsf{NextUSP} can be solved in polynomial time.
\end{theorem}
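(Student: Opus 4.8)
The plan is to exploit the $k$-centrum structure of $f_{\lambda}$ together with the assumption that $k$ is a constant, and to reduce \textsf{NextUSP} to polynomially many shortest-path-type computations.

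The starting point is the standard linearisation of the sum of the $k$ largest entries: for every simple path $P$,
\[
f_{\lambda}(P)=\sum_{i=1}^{k}c_{(i)}(P)=\min_{\theta\ge 0}\Big(k\theta+\sum_{a\in A(P)}\max\{0,c(a)-\theta\}\Big),
\]
the minimum being attained at $\theta=c_{(k)}(P)$ (with the convention $c_{(k)}(P)=0$ whenever $l(P)<k$), and it suffices to let $\theta$ range over $\{0\}\cup\{c(a):a\in A\}$. For fixed $\theta$ the bracketed term is, up to the additive constant $k\theta$, an ordinary shortest-path objective with the \emph{nonnegative} arc weights $c_{\theta}(a):=\max\{0,c(a)-\theta\}$; in particular $f^{*}_{\lambda}(s,t)$ itself is computable by $\mathcal{O}(m)$ shortest-path computations, each of which returns a simple path. (Alternatively one may invoke the polynomial-time algorithm for the $k$-sum shortest path problem with fixed $k$ from \citep{garfinkel2006thek}.)

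Next I would use that, since $k$ is a constant, $f_{\lambda}(P)$ is determined by only a constant amount of data about $P$: the threshold $\theta=c_{(k)}(P)$ together with the sorted tuple $(w_{1}\ge\dots\ge w_{j})$ of the at most $j\le k-1$ arc costs of $P$ that strictly exceed $\theta$, via $f_{\lambda}(P)=\sum_{i=1}^{j}w_{i}+(k-j)\theta$. There are only $\mathcal{O}(m^{k})$ such \emph{profiles} $(\theta;w_{1},\dots,w_{j})$ with $\theta\in\{0\}\cup\{c(a)\}$ and each $w_{i}\in\{c(a):c(a)>\theta\}$. The algorithm would enumerate them, retain exactly those whose value $\nu:=\sum_{i}w_{i}+(k-j)\theta$ satisfies $\mu+1\le\nu\le\psi$, and for each retained profile decide whether some simple $s$-$t$-path \emph{realises} it, i.e.\ uses exactly $j$ arcs of cost $>\theta$ with cost multiset $\{w_{1},\dots,w_{j}\}$, at least $\max\{0,k-j\}$ arcs of cost exactly $\theta$, and no arc of cost $>w_{1}$; the answer to \textsf{NextUSP}$(G,\lambda,P',\psi)$ is affirmative iff at least one retained profile is realisable. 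The degenerate short-path case $l(P)<k$ can be disposed of separately, since there are only $\mathcal{O}(n^{k-1})$ simple $s$-$t$-paths with fewer than $k$ arcs.

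The remaining step — and the one I expect to be the main obstacle — is to test realisability of a single profile in polynomial time. Here I would first delete all arcs of cost $>w_{1}$, and then set up an auxiliary product graph on $V\times\{0,\dots,k-1\}\times\{0,\dots,k\}$, the extra coordinates recording the multiset of high-cost arcs consumed so far and the number of cost-$\theta$ arcs consumed so far (the latter capped at $k$); for constant $k$ this graph has polynomial size, and realisability turns into a reachability/feasibility query in it. The delicate point is guaranteeing that a path found in the product graph projects to a genuinely \emph{simple} path of $G$ while simultaneously being forced through the prescribed high-cost arcs — an unrestricted ``route a simple directed path through $j$ designated arcs'' requirement is of directed-linkage type, so the construction must lean on the constancy of $k$ (only a bounded number of segments and counters arise) and on the nonnegativity of the weights $c_{\theta}$. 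Granting that this feasibility test runs in time polynomial in $n$ and $m$, the overall running time is that test multiplied by the $\mathcal{O}(m^{k})$ profiles, hence polynomial for fixed $k$; the optimisation version of \textsf{NextUSP} then follows by outputting a realisable profile of minimum value among those with $\nu\ge\mu+1$.
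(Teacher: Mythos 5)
There is a genuine gap, and it sits exactly where you say you expect it: the realisability test is never established, and your conclusion is explicitly conditional on it (``Granting that this feasibility test runs in time polynomial\dots''). That step is not a technicality one can grant — it carries essentially all of the difficulty of the theorem. Your product graph on \(V\times\{0,\dots,k-1\}\times\{0,\dots,k\}\) records only counters, so a path in it projects to a \emph{walk} in \(G\): nothing prevents it from revisiting a vertex of \(G\) at two different counter states, and restricting attention to simple paths of the product graph does not capture all simple realising paths of \(G\) either. Moreover, the requirement ``simple directed \(s\)-\(t\)-path through \(j\) designated arcs'' already contains, for \(j=1\), the directed two-vertex-disjoint-paths problem of Fortune, Hopcroft and Wyllie, which is \(\mathcal{NP}\)-complete; so no generic reachability construction can work, and any polynomial-time test must lean on the cost restrictions (no arc of cost exceeding \(w_1\), etc.) in a way your sketch does not make precise. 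The surrounding machinery — the \(\theta\)-linearisation of the \(k\)-centrum objective and the \(\mathcal{O}(m^{k})\) profile bookkeeping — is correct but is essentially a reparametrisation of what the paper already does, namely enumerating the \({m \choose k}\) candidate sets of \(k\) ``large'' arcs in order of increasing sum; both routes then stand or fall with the same feasibility subproblem.

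For comparison, the paper resolves that subproblem by deleting every arc outside the candidate set \(R^1\) whose cost exceeds the cheapest arc of \(R^1\), fixing one of the \(k!\) orderings of the candidate arcs, and stitching the segments together with \(k\) consecutive depth-first searches, iterating over all orderings; this gives \(\mathcal{O}(k!\cdot k\cdot(n+m))\) work per candidate set and the overall bound \(\mathcal{O}\bigl(k!\cdot k\cdot(n+m)\cdot{m \choose k}\bigr)\). (The paper is itself rather brisk about why consecutive DFS segments can be taken vertex-disjoint, but that is the device it relies on.) To complete your argument you would need either to adopt that device or to supply an actual polynomial-time realisability test; as written, the proposal stops short of proving the theorem.
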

\begin{proof}
	The number of possibilities to draw \(k\) arcs out of \(A\) without replacement and without order is $m \choose k$, which is in \(\mathcal{O}(m^k)\) due to ${m \choose k} = \frac{m!}{k! \cdot (m-k)!} = \frac{m\cdot (m-1) \cdot \ldots \cdot (m-k+1)}{1\cdot \ldots \cdot k} \in \mathcal{O}(m^k)$ for \(k\leq n-1\) fixed and, thus, polynomial. Let \(\mathcal{R}\) be the collection of all those arc sets with \(k\) elements and let \(R^1 \in \mathcal{R}\) with \(f_{\lambda}(R^1) = \min\{f_{\lambda}(R) \in \mathcal{R}\mid f_{\lambda}(R) > \mu\} = \psi \in \mathbb{Z}_+\), where \(f_{\lambda}(R) \coloneqq \sum_{a\in R}c(a)\) and \(\mu = f_{\lambda}(P')\) for some given \(P' \in \mathcal{P}_{st}\).
	Further, let \(a^*\) be the arc in \(R^1\) with the smallest arc cost in \(R^1\), i.e., \(c(a^*) \leq c(a)\) for all \(a \in R^1\). Now, we delete all arcs in \(A\backslash R^1\) with cost greater than \(c(a^*)\). With \(Q\), we denote the remaining arcs. There are \(k!\) possibilities to sort the \(k\) arcs in \(R^1\). We aim to find a path from \(s\) to \(t\) using all arcs in \(R^1\) in the graph \(G\) with arc set \(Q\) with respect to one specified sorting. This can be done by \(k\) consecutive depth-first-search procedures, which can be done in \(\mathcal{O}(k \cdot (n+m))\). We test all of these \(k!\) possible sortings until we find an \(s\)-\(t\)-path. If no such feasible completion exists, we choose another collection \(R^2 \in \mathcal{R}\) with \(f_{\lambda}(R^2) = \min\{f_{\lambda}(R) \in \mathcal{R}\mid f_{\lambda}(R) \geq \psi = f_{\lambda}(R^1)\}\) and apply the same method as described above. We repeat this procedure at most \(m\choose k\) times. Thus, the total running time complexity of this procedure is in \(\mathcal{O}(k!\cdot k \cdot(n+m)\cdot {m\choose k})\), which concludes the proof.
\end{proof}

\begin{corollary}
	For $\lambda = (1,0,\ldots,0)$, the problem \textsf{NextUSP} can be solved in polynomial time.
\end{corollary}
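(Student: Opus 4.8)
The plan is to apply Theorem \ref{thm:fixed} with $k=1$: for $\lambda=(1,0,\ldots,0)$ the objective is $f_{\lambda}(P)=c_{(1)}(P)=\max_{a\in A(P)}c(a)$, i.e.\ the bottleneck objective, which is precisely the $k=1$ instance of the family $(\underbrace{1,\ldots,1}_k,0,\ldots,0)$ covered there. Since $k=1$ is a fixed constant, the running time bound $\mathcal{O}(k!\cdot k\cdot(n+m)\cdot\binom{m}{k})$ of that theorem collapses to $\mathcal{O}((n+m)\cdot m)$, which is polynomial in the input size; this already establishes the claim.

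For completeness I would also spell out the resulting procedure directly, since it simplifies considerably in this case. Given $P'\in\mathcal{P}_{st}$ with $\mu=f_{\lambda}(P')=\max_{a\in A(P')}c(a)$ and the input threshold $\psi>\mu$, observe that the bottleneck value of any simple $s$-$t$-path equals the cost of one of its arcs, hence lies in the finite set $C:=\{c(a):a\in A\}$. First I would collect the candidate values $C_{>\mu}:=\{v\in C:v>\mu\}$ and sort them increasingly as $v_1<v_2<\dots<v_r$. For each $v_j$ in turn I would form the subgraph $G_j:=G\setminus\{a\in A:c(a)>v_j\}$ and test, via a single breadth- or depth-first search from $s$, whether $t$ is reachable in $G_j$. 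The smallest index $j^\star$ for which $t$ is reachable then yields $\psi^\star:=v_{j^\star}=\min\{f_{\lambda}(P):P\in\mathcal{P}_{st},\,f_{\lambda}(P)\geq\mu+1\}$, and I would answer \enquote{yes} to \textsf{NextUSP}$(G,\lambda,P',\psi)$ if and only if such a $j^\star$ exists and $\psi^\star\leq\psi$. The running time is dominated by at most $r\leq m$ reachability checks at $\mathcal{O}(n+m)$ each, plus $\mathcal{O}(m\log m)$ for sorting, giving $\mathcal{O}(m(n+m))$ overall.

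I do not anticipate a genuine obstacle here; the only point that deserves a sentence of care is the correctness of the reduction to reachability, namely that $G_j$ contains exactly the $s$-$t$-paths of bottleneck value at most $v_j$, and that the existence of an $s$-$t$-walk in $G_j$ implies the existence of a simple $s$-$t$-path whose bottleneck is still at most $v_j$ (immediate, since removing cycles from a walk only removes arcs) while, as any such bottleneck lies in $C$ and all arcs of cost $\le\mu$ that are smaller than $v_1$ never raise the value above some $v_i\le v_j$, one indeed has $\psi^\star>\mu$. This justifies that $\psi^\star$ as computed is the optimal value of \textsf{NextUSP}, completing the argument.
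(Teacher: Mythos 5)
Your first paragraph is exactly the paper's proof: the corollary is obtained by invoking Theorem~\ref{thm:fixed} with $k=1$, so the proposal is correct and takes essentially the same route. The additional direct algorithm via thresholding on arc costs and reachability checks is a valid (and in fact sharper) $\mathcal{O}(m(n+m))$ argument, but it is supplementary to, not a replacement for, the reduction the paper uses.
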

\begin{proof}
	Follows from Theorem \ref{thm:fixed}.
\end{proof}

\begin{theorem}
	For $\lambda = (\underbrace{0,\ldots,0}_{k-1},1,0,\ldots,0)$ and \(k \in \mathbb{Z}_+ \text{ with } k\leq n-1\), the problem \textsf{NextUSP} is \(\mathcal{NP}\)-complete.
\end{theorem}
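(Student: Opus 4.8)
The plan is to establish membership in $\mathcal{NP}$ and then reduce from the longest path problem, exactly in the spirit of the proof of Theorem \ref{thm:NextUSP}, but engineered so that the \(k\)-th largest arc cost, rather than the total cost, encodes the existence of a long simple path. Membership is immediate: given a candidate path \(P \in \mathcal{P}_{st}\) one sorts its at most \(n-1\) arc costs in \(\mathcal{O}(n \log n)\) time, reads off \(c_{(k)}(P)\), and checks \(\mu + 1 \le c_{(k)}(P) \le \psi\).

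For the hardness part I would first describe the construction targeting the coordinate \(k = n-1\) and afterwards pad it so that the nonzero entry can sit in any prescribed position. Start from an instance of the longest path problem on a graph \(H = (V_H, A_H)\) with \(|V_H| = \nu\) and distinguished vertices \(\sigma, \tau\), asking whether a simple \(\sigma\)-\(\tau\)-path with at least \(L := \nu - 1\) arcs exists (equivalently, a Hamiltonian \(\sigma\)-\(\tau\)-path), which is \(\mathcal{NP}\)-complete, cf. \citep{garey2002computers}. I would build \(G = (V, A)\) by putting \(V = V_H\), \(s = \sigma\), \(t = \tau\), assigning unit cost \(c(a) = 1\) to every \(a \in A_H\), and adding a single shortcut arc \(e = (s,t)\) of cost \(1\) so that \(P' = (e)\) is a feasible \(s\)-\(t\)-path. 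Placing the unique \(1\) of \(\lambda\) in coordinate \(k = n-1 = \nu - 1\) gives \(f_{\lambda}(P') = c_{(\nu-1)}(P') = 0 =: \mu\), since \(l(P') = 1 < \nu - 1\), and I would set \(\psi := 1\), which satisfies \(\psi > \mu\).

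The correctness argument is the crux and rests on the fact that, under unit costs, \(c_{(k)}(P) = 1\) holds precisely when \(l(P) \ge k\) and \(c_{(k)}(P) = 0\) otherwise, so the \(k\)-max value collapses into a pure length threshold. Because \(e\) enters \(t\) and any reuse of \(s\) would destroy simplicity, the arc \(e\) can only occur as the terminal arc of a path, so every \(s\)-\(t\)-path other than \(P'\) lies entirely in \(H\); for such a path \(c_{(\nu-1)}(P) = 1\) holds if and only if \(l(P) \ge \nu - 1\), i.e. if and only if \(P\) is a Hamiltonian \(\sigma\)-\(\tau\)-path. Hence a path \(P\) with \(\mu + 1 = 1 \le c_{(k)}(P) \le 1 = \psi\) exists if and only if \(H\) admits a Hamiltonian \(\sigma\)-\(\tau\)-path, and the whole construction is clearly polynomial. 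I expect the main obstacle to be ruling out spurious completions: one must verify that the shortcut arc \(e\) together with the tight choice \(\psi = 1\) cannot certify a yes-answer through a non-Hamiltonian path, which is exactly where the terminal-arc property of \(e\) and the degeneration of \(c_{(k)}\) into a length test are invoked.

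Finally, to show that the hardness does not depend on \(k\) coinciding with the top coordinate \(n-1\), I would, for a prescribed position \(k\), start from a longest path instance on \(\nu = k+1\) vertices with threshold \(L = k\) and then append \(r \ge 0\) isolated dummy vertices. These dummies leave every \(s\)-\(t\)-path untouched but raise the vertex count to \(n = k + 1 + r\), so that the unique nonzero entry of \(\lambda\) now sits in the prescribed coordinate \(k = \nu - 1 \le n-1\), while the argument above still certifies a Hamiltonian path through the coordinate \(k\). Thus the reduction can be made to land on any prescribed position \(k\), and consequently \textsf{NextUSP} with the \(k\)-max weight vector is \(\mathcal{NP}\)-complete, which concludes the plan.
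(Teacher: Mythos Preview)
Your core reduction is correct and somewhat simpler than the paper's. The paper reduces from the unit-cost longest path problem with an arbitrary threshold $L$: it appends to $G$ a fresh chain $s \to 1 \to \cdots \to L-1 \to t$ of $L$ arcs, assigns cost $\mu+1$ to every original arc and cost $\mu$ to every chain arc, sets $k=L$, and takes the chain itself as $P'$, so that $c_{(k)}(P') = \mu$; a simple $s$-$t$-path in the enlarged graph then has $c_{(L)}(P) \geq \mu+1$ precisely when it stays inside $G$ and uses at least $L$ arcs. You instead specialise to $L = \nu-1$ (Hamiltonian path), replace the whole chain by a single shortcut arc $e=(s,t)$ of unit cost, and exploit that under unit costs $c_{(k)}(P)$ collapses to the indicator of $l(P) \geq k$, so that $\mu = 0$ and $\psi = 1$ already do the job. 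Both constructions are valid; yours is more economical, while the paper's two-level cost scheme accommodates an arbitrary threshold $L$ (a generality not actually needed for the statement).

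One caveat concerning your final paragraph: in this theorem $k$ is part of the input (it is carried by $\lambda$), so exhibiting hardness on the subfamily of instances with $k = n-1$ already proves the result in full; the padding is unnecessary. Worse, the phrasing ``for a prescribed position $k$, start from a longest path instance on $\nu = k+1$ vertices'' can be read as fixing $k$ first and then reducing from a problem on a constant number of vertices, which would prove nothing. If your intent is merely that adding isolated vertices lets you realise any gap $n-1-k$, that is true but superfluous; I would drop the paragraph.
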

\begin{proof}
	The corresponding decision version of \textsf{NextUSP} is as follows: Given a path \(P'\) with \(f_{\lambda}(P') = c_{(k)}(P') = \mu\), an integer value  \(\psi \in \mathbb{Z}\) with \(\psi>\mu\), a graph \(G=(V,A)\), \(k \leq n-1\) with \(k \in \mathbb{Z}_+\) and two distinct vertices \(s,t \in V\), does there exist a path \(P \in \mathcal{P}_{st}\) such that \(c_{(k)}(P) \geq \mu+1\) and \(c_{(k)}(P)\leq \psi\).
	The decision version of the problem is clearly in \(\mathcal{NP}\). To show that the decision version of this problem is \(\mathcal{NP}\)-complete, we conduct a polynomial time reduction from the longest path problem, which is known to be \(\mathcal{NP}\)-complete, even in graphs with \(c(a)=1\) for all \(a \in A\), cf. \citep{garey2002computers}.

	Given an instance \(G=(V,A),\ s,t \in V,\ c: A\rightarrow \mathbb{Z}_+ \text{ with } c(a)=1 \text{ for all } a\in A \text{ and } L \in \mathbb{Z}_+\) with \(L\leq n-1\) of the longest path problem (does there exist a path \(P\) from \(s\) to \(t\) with \(c(P)\geq L\)), we construct an instance of \textsf{NextUSP} as follows:
	Let \(G' \coloneqq (V',A')\) with \(V' \coloneqq V \cup \{1,2,\ldots,L-1\}\) and \(A'~\coloneqq~A \cup \{(s,1), (L-1,t)\} \cup \{(i,i+1) \mid i \in \{1,\ldots, L-2\}\}\). Further, let \(\psi \coloneqq L + c_{max} \cdot|V|\), where \(c_{max} \coloneqq \max \{c(a) \mid a\in A\}\), \(k = L\) and consequently $\lambda' = (\underbrace{0,\ldots,0}_{k-1},1,0,\ldots,0)$. Moreover, we define a cost function \(c':A' \rightarrow \mathbb{Z}_+\) as follows:
	\begin{equation*}
	c'(a) \coloneqq \begin{cases}
	\mu + 1, \text{ if } a \in A\\
	\mu, \text{ if } a \in A'\backslash A
	\end{cases}
	\end{equation*}
	
	Let \(P' = (s,1,2,\ldots,L-1,t)\) be the path in \(G'\) with \(c_{(k)}(P') = \mu\).
	
	Then, there exists a path \(P\) in \(G'\) with \(\psi \geq c_{(k)}(P)\geq \mu + 1\) if and only if there exists a path \(P\) in \(G\) with \(c(P) \geq L\). Note that the bound \(L\) is polynomial in the input size since a simple longest path with unit costs can have at most length \(|V|-1\). Thus, it follows that this reduction can be done in polynomial time, which concludes the proof.
\end{proof}

We state two algorithms for the Universal Near Shortest Simple Paths Problem, i.e., \textsf{UNSSPP\((G,\lambda)\)}, see Algorithm~\ref{alg:UNSP} and Algorithm \ref{alg:UNSPimproved}. 

Algorithm \ref{alg:UNSP} works as follows. In the \(i\)-th iteration, let \(P\) be the current considered \(s\)-\(u\)-path for some \(u \in V\setminus\{t\}\). We extend \(P\) by an arc \(a = (u,v)\) if and only if the best possible completion of \(P\circ a \in \mathcal{P}_{sv}\) by a path \(P^* \in \mathcal{P}_{vt}\), is still smaller or equal to \(B\).
Further, to ensure simplicity of the path \(P\circ a \circ P^*\), we have to calculate a path \(P^*\) such that no vertex is repeated in \(P\circ a \circ P^*\), see Remark \ref{remark:1}. Every time we reach the sink vertex \(t\) applying the above described procedure, we output this path \(P\) along with the corresponding universal objective function value \(D_{\lambda}\).

\begin{remark}\label{remark:1}
	By temporarily deleting all outgoing arcs of vertices \(v \in V(P)\) of the current enumerated \(s\)-\(u\)-path except for the arcs in \(P\circ a\), we ensure that the universal shortest path problem solved in line 6 of Algorithm \ref{alg:UNSP} follows path \(P\). Consequently, \(P\circ a \circ P^*\) is the best possible completion of \(P\) via \(a\) to \(t\) with respect to \(\lambda\). Further, by deleting these arcs we ensure that \(P\circ a \circ P^*\) is simple.
\end{remark}

\begin{algorithm}
	\caption{Universal Near Shortest Simple Paths Algorithm}\label{alg:UNSP}
	\begin{algorithmic}[1]
		\Require{A digraph \(G=(V,A),\ c: A \rightarrow \mathbb{Z}_+\), source \(s\), sink \(t\), weight vector \(\lambda \in \mathbb{Z}^{n-1}\), the universal shortest path distance \(f^*_{\lambda}(s,t)\), \(B = (1+\varepsilon)\cdot f^*_{\lambda}(s,t),\ \varepsilon \geq 0\)}
		\Ensure{All universal near shortest simple paths from \(s\) to \(t\)}
		\newline
		
		\Function{\texttt{UNSSP}}{G,source,sink,P,$\text{D}_{\lambda}$}
		\If{source \(=\) sink}
		\State print(P, \(\text{D}_{\lambda}\))
		\Else
		\For{\(a=(\text{source},v) \in A\)}
		\If{\(\underset{P^* \in \mathcal{P}_{v,sink}}{\min} f_{\lambda}(P\circ a\circ P^*) \leq B \AND v \notin V(P)\)}
		\State put \(a\) on top of P
		\State \(\text{D}_{\lambda} \gets f_{\lambda}(P \circ a)\)
		\State \Call{\texttt{UNSSP}}{G,\(v\),sink,P,$\text{D}_{\lambda}$}
		\State pop \(a\) from P
		\EndIf
		\EndFor
		\EndIf
		\EndFunction
	\end{algorithmic}
	\vspace{0.5cm}
	\textbf{Initialization:} \texttt{UNSSP}(G,\(s\),\(t\),\([.]\),\(0\))
\end{algorithm}

\begin{theorem}
	Algorithm \ref{alg:UNSP} correctly computes the set of all universal near shortest simple paths between source \(s\) and sink \(t\).
\end{theorem}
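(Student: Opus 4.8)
The plan is to establish the two inclusions separately. First I would prove \textbf{soundness}: every path printed by Algorithm~\ref{alg:UNSP} is a simple \(s\)-\(t\)-path \(P\) with \(f_\lambda(P)\le B\). Then I would prove \textbf{completeness}: every simple \(s\)-\(t\)-path \(P\) with \(f_\lambda(P)\le B\) is printed. The common backbone is the loop invariant, proved by induction on the recursion depth: in every call \texttt{UNSSP}(\(G,u,t,P,D_\lambda\)) the argument \(P\) is a simple \(s\)-\(u\)-path and \(D_\lambda=f_\lambda(P)\), where \(f_\lambda\) of the empty path is read as \(0\). The inductive step is routine: line~6's guard \(v\notin V(P)\) preserves simplicity when an arc is appended in line~7, and line~8 restores \(D_\lambda=f_\lambda(P\circ a)\) for the recursive call. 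Because every \(P\) occurring in a call is simple we have \(l(P)\le n-1\), so the recursion tree has depth at most \(n-1\); being finitely branching and of bounded depth, it is finite, which gives termination; moreover each printed path occurs exactly once, since distinct prints are produced along distinct branches of the recursion and hence correspond to distinct arc sequences.

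\emph{Soundness.} A print in line~3 is executed only when \(\mathrm{source}=\mathrm{sink}=t\), so by the invariant the printed \(P\) is a simple \(s\)-\(t\)-path; if \(P\) is empty then \(s=t\), which is excluded, so \(P\) has a last arc \(a=(w,t)\) and \(P=P'\circ a\) for some simple \(s\)-\(w\)-path \(P'\). The iteration of line~6 that appended \(a\) verified its condition; since the segment to be completed runs from \(t=\mathrm{sink}\) to \(t\), the only completion is the empty path and the condition reads \(f_\lambda(P'\circ a)=f_\lambda(P)\le B\). Hence \(P\) is a universal near shortest simple path, as required.

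\emph{Completeness.} Let \(Q=(a_1,\dots,a_k)\in\mathcal{P}_{st}\) with \(f_\lambda(Q)\le B\), let \(s=v_0,\dots,v_k=t\) be its vertices, and set \(Q_j=(a_1,\dots,a_j)\). I would show by induction on \(j\) that the algorithm executes a call \texttt{UNSSP}(\(G,v_j,t,Q_j,f_\lambda(Q_j)\)); the base case \(j=0\) is the initializing call. For the step, assume the call for \(Q_j\) with \(j<k\); since \(Q\) is simple we have \(v_j\neq t\), so the for-loop is entered and in particular considers the arc \(a_{j+1}=(v_j,v_{j+1})\). Its guard has two parts. The part \(v_{j+1}\notin V(Q_j)\) holds because \(Q\) is simple. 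For the other part, Remark~\ref{remark:1} tells us that the minimization in line~6 effectively ranges over those simple \(P^{*}\in\mathcal{P}_{v_{j+1},t}\) for which \(Q_j\circ a_{j+1}\circ P^{*}\) is simple; the tail \(P^{*}:=(a_{j+2},\dots,a_k)\) is one such completion (it avoids \(V(Q_j)\cup\{v_{j+1}\}\) since \(Q\) is simple, and for \(j=k-1\) it is the empty completion treated as in the soundness part), and \(Q_j\circ a_{j+1}\circ P^{*}=Q\), so the minimum is at most \(f_\lambda(Q)\le B\). Both parts of the guard hold, so lines~7--9 append \(a_{j+1}\) and recurse with \(D_\lambda=f_\lambda(Q_{j+1})\), which is the claimed call. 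For \(j=k\) the call has \(\mathrm{source}=v_k=t=\mathrm{sink}\), so line~3 prints \(Q\).

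The step I expect to be the crux is the two-sided use of the line~6 test in combination with Remark~\ref{remark:1}. For soundness one needs that restricting the completion so that \(Q_j\circ a\circ P^{*}\) stays simple is \emph{strong enough} to force every printed path to be simple, which is exactly what the temporary arc deletions of Remark~\ref{remark:1} guarantee; for completeness one needs that this restriction is \emph{not too strong}, i.e.\ that it never discards a near shortest simple path, which holds because the tail of any simple \(s\)-\(t\)-path is an admissible completion of each of its prefixes. Note that no monotonicity of \(f_\lambda\) under concatenation is needed anywhere, since line~6 tests feasibility through an explicit optimal completion rather than through a lower bound; the one further point requiring care is the degenerate case in which the appended arc already reaches \(t\), where the best completion is the empty path and line~6 collapses to the test \(f_\lambda(P\circ a)\le B\).
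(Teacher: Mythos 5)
Your proof is correct and follows essentially the same approach as the paper's: both arguments rest on the observation that the line~6 test combined with the arc deletions of Remark~\ref{remark:1} is strong enough to guarantee that every printed path is a simple near shortest path, and not so strong that it prunes the prefix of any simple \(s\)-\(t\)-path \(Q\) with \(f_\lambda(Q)\le B\), since the tail of \(Q\) is always an admissible completion. Your write-up is considerably more detailed than the paper's one-paragraph argument (explicit invariant, induction on prefixes, termination, and the degenerate empty-completion case), but the underlying idea is identical.
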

\begin{proof}
	The statement follows immediately from line 6 of the algorithm, since we only extend a current \(s\)-\(u\)-path \(P\) via an arc \(a = (u,v)\), if \(P\) via \(a\) has possibly still a feasible completion, i.e., \(\underset{P^* \in \mathcal{P}_{v,sink}}{\min} f_{\lambda}(P\circ a\circ P^*) \leq (1+\varepsilon)~\cdot f^*_{\lambda}(s,t) \text{ and } v \notin V(P)\). By Remark \ref{remark:1}, we know that \(P\circ a \circ P^*\) contains only vertices not already present in the current enumerated \(s\)-\(u\)-path \(P\). Thus, it holds that \(P\) via \(a\) along the path \(P^*\) corresponding to the best possible completion of \(P\circ a\) is feasible.
\end{proof}

Further, due to the dynamic programming structure using a depth-first-search strategy of Algorithm \ref{alg:UNSP}, we can show that the amount of work per path enumerated is polynomially bounded, provided that the underlying universal shortest path problem can be solved in polynomial time, e.g., for the shortest path problems presented in Table \ref{tab:universal}.

\begin{theorem}\label{thm:poly}
	The amount of work per path enumerated in Algorithm \ref{alg:UNSP} for \(\lambda \in \mathbb{Z}_+^{n-1}\) is in \(\mathcal{O}(mT)\), where \(T\) denotes the time for solving \textsf{USPP\((G,\lambda)\)}.
\end{theorem}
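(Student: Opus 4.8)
The plan is to analyse the recursion tree $\mathcal{T}$ produced by the initial call \texttt{UNSSP}$(G,s,t,[.],0)$: its nodes are the invocations of \texttt{UNSSP}, and the children of a node are the invocations issued by the recursive calls inside its \texttt{for}-loop. Note first that $\mathcal{T}$ is finite, since the test $v\notin V(P)$ in line~6 forces the arc sequence $P$ recorded at any node to be a simple path, so $\mathcal{T}$ has depth at most $n-1$. The first real step is to bound the work spent \emph{inside} a single node. If the path $P$ of a node ends at a vertex $u$, then its \texttt{for}-loop ranges over the $|\delta^+(u)|$ arcs leaving $u$, and for each of them line~6 performs one universal shortest path computation on the restricted graph of Remark~\ref{remark:1} (which has at most $n$ vertices and $m$ arcs), at cost $\mathcal{O}(T)$; the other operations of an iteration — the check $v\notin V(P)$, pushing and popping an arc, updating $\text{D}_\lambda=f_\lambda(P\circ a)$ — are dominated by this. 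Hence a node ending at $u$ costs $\mathcal{O}\bigl(|\delta^+(u)|\,T\bigr)$.

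The second step is a monotonicity argument that pins down the shape of $\mathcal{T}$. Suppose a node with path $P$ ending at $u\ne t$ has been created. If it is not the root, then the test its parent passed on the arc joining them yields a completion $P^*\in\mathcal{P}_{u,t}$ (in the corresponding restricted graph of Remark~\ref{remark:1}) with $f_\lambda(P\circ P^*)\le B$. Writing $P^*=(u,w)\circ\widetilde P$, the subpath $\widetilde P$ still survives the temporary deletions performed for the candidate arc $(u,w)$ — none of its vertices lies in $V(P)$ because $P\circ P^*$ is simple — so $\min_{Q\in\mathcal{P}_{w,t}} f_\lambda(P\circ(u,w)\circ Q)\le f_\lambda(P\circ P^*)\le B$, and the arc $(u,w)$ passes the test: the node has at least one child. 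For the root the same holds, since an optimal $s$-$t$-path $P^*$ satisfies $f_\lambda(P^*)=f^*_\lambda(s,t)\le(1+\varepsilon)\,f^*_\lambda(s,t)=B$, using $f^*_\lambda(s,t)\ge 0$, which follows from $c\ge 0$ and $\lambda\in\mathbb{Z}_+^{n-1}$; this is the only place where the sign condition on $\lambda$ enters. Consequently a node of $\mathcal{T}$ is a leaf exactly when its current vertex is $t$, i.e.\ exactly when it prints a path, so by correctness of Algorithm~\ref{alg:UNSP} the leaves of $\mathcal{T}$ are in bijection with the $N\ge 1$ enumerated universal near shortest simple paths, and in particular every node of $\mathcal{T}$ has a descendant leaf.

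The final step is an amortisation. Charge the $\mathcal{O}\bigl(|\delta^+(u)|\,T\bigr)$ cost of each node (which ends at some $u$) to an arbitrary descendant leaf, which exists by the previous step. A fixed leaf $Q=(v_0,v_1,\dots,v_l)$ with $v_l=t$ can be charged only by its ancestors in $\mathcal{T}$, and these are precisely the prefixes of $Q$, which end at the \emph{pairwise distinct} vertices $v_0,\dots,v_l\in V(Q)$. Hence the total cost charged to $Q$ is at most $\mathcal{O}(T)\sum_{i=0}^{l}|\delta^+(v_i)|\le \mathcal{O}(T)\sum_{v\in V}|\delta^+(v)|=\mathcal{O}(mT)$, and summing over the $N$ leaves gives total running time $\mathcal{O}(mT\cdot N)$, i.e.\ $\mathcal{O}(mT)$ per path enumerated.

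I expect the amortisation (together with the monotonicity lemma feeding it) to be the main obstacle. A priori $\mathcal{T}$ may contain long chains of degree-one nodes, so its number of nodes need not be $\mathcal{O}(N)$, and the naive bound of $\mathcal{O}(mT)$ work per node combined with the $\mathcal{O}(n)$ depth would only yield $\mathcal{O}(nmT)$ per path; the point is that the expensive work at a node ending at $u$ is really $\mathcal{O}(|\delta^+(u)|\,T)$, and that along any root-to-leaf path these out-degrees are taken over distinct vertices and therefore sum to at most $m$. One must also take care that the deletions of Remark~\ref{remark:1} do not destroy the completion $\widetilde P$ used in the monotonicity step, which is exactly why that step is argued for the specific arc $(u,w)$ on $P^*$ rather than for an arbitrary outgoing arc.
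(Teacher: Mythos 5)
Your proof is correct and rests on the same two facts as the paper's: each arc examination in line~6 costs one \textsf{USPP} solve, and the out-degrees of the endpoints of the prefixes of a simple path sum to at most $m$. The route is nonetheless organised differently. The paper argues a \emph{delay} bound directly — between two consecutive outputs the algorithm backtracks at most $n-1$ levels and re-extends at most $n-1$ levels, scanning at most $m$ arcs in total — whereas you prove an \emph{amortised} bound by charging each recursion-tree node to a descendant leaf; both imply the stated ``work per path enumerated'' bound. The genuine added value of your version is the explicit monotonicity lemma: every invocation created by a successful line-6 test has itself a successful extension, because the witnessing completion $P^*$ survives the temporary deletions of Remark~\ref{remark:1} (its vertices avoid $V(P)$ by simplicity of $P\circ P^*$), so the recursion tree has no dead-end branches and its leaves are exactly the printed paths. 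The paper uses this fact implicitly when it asserts that ``the algorithm scans at most all of the arcs in $G$ before generating the first path''; without it, the algorithm could a priori explore fruitless subtrees and the $\mathcal{O}(mT)$ count would fail. You also correctly isolate where $\lambda\in\mathbb{Z}_+^{n-1}$ enters (nonnegativity of $f^*_\lambda(s,t)$ so that the root has a child), which the paper does not discuss. In short: same underlying idea, but your write-up supplies the key lemma and the bookkeeping that the paper's proof only gestures at.
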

\begin{proof}
	In line 6 of Algorithm~\ref{alg:UNSP}, we have to solve a universal shortest path problem using the technique mentioned in Remark \ref{remark:1} to ensure simplicity. 
	Further, the algorithm scans at most all of the arcs in \(G\) before generating the first path, which is in \(\mathcal{O}(mT)\), since every time we investigate an arc, we have to solve a universal shortest path problem. 
	Thus, the amount of work before generating a first path can assumed to be in \(\mathcal{O}(mT)\). 
	Further, we have to check whether the current enumerated path \(P\) extended by arc \(a\) is still a simple path. This can be done in \(\mathcal{O}(n)\) and can be neglected. 
	From now on, we have to start backtracking. Here, at most \(n-1\) steps are involved (in the worst case back to \(s\)), and there are at most \(n-1\) steps to extend the path back to \(t\), which is again in \(\mathcal{O}(mT)\) following the same argumentation as above. Since this argument holds true for all subsequent enumerated paths,  the amount of work per path enumerated is in \(\mathcal{O}(mT)\).	
\end{proof}

The second algorithm to solve \textsf{UNSSPP\((G,\lambda)\)}, see Algorithm \ref{alg:UNSPimproved}, is again a recursive algorithm with a better running time complexity per path enumerated than Algorithm \ref{alg:UNSP}. However, in practice, the running time of Algorithm~\ref{alg:UNSP} might be better compared to Algorithm~\ref{alg:UNSPimproved} as stated in Remark \ref{remark:alg}. The algorithm works as follows. In the initialization phase, the universal shortest path \(P\) with \(V(P)=(s=v_0,\ldots,v_k=t)\) is computed. Further, the graph \(G\) gets modified, i.e., \(G' = G\setminus F\), in such a way that the universal shortest \(s\)-\(t\)-path in \(G'\) has to follow \(P\) until its last arc \(a=(v_{k-1},v_k)\). One can interpret \(F\) as the set of forbidden arcs. We store arc \(a\) in \(C[v_{k-1}]\) until no more universal near shortest paths from \(s\) to \(t\) following \(P\) except for arc \(a\) exist. Note that \(C[v_{k-1}]\) can be seen as the set of outgoing arcs of \(v_{k-1}\), which have already been considered and all paths using these arcs have already been discovered. Certainly, the same procedure is recursively repeated for all universal near shortest paths \(P'\) following \(P\) except for arc \(a\). Afterwards, we remove \(a\) from \(C[v_{k-1}]\), i.e., \(C[v_{k-1}] = \emptyset\). We recursively repeat this procedure along \(P\) until we reach the start vertex \(s\).

\begin{algorithm}
	\caption{Universal Near Shortest Simple Paths Algorithm}\label{alg:UNSPimproved}
	\begin{algorithmic}[1]
		\Require{A digraph \(G=(V,A),\ c: A \rightarrow \mathbb{Z}_+\), source \(s\), sink \(t\), weight vector \(\lambda \in \mathbb{Z}^{n-1}\), the universal shortest path distance \(f^*_{\lambda}(s,t)\) from \(s\) to \(t\) in \(G\), \(B = (1+\varepsilon)\cdot f^*_{\lambda}(s,t),\ \varepsilon \geq 0\)}
		\Ensure{All universal near shortest simple paths from \(s\) to \(t\)}
		\newline
		
		\Function{\texttt{Initialization}}{}
		\State \(P \gets\) \textsf{USPP}\((G,\lambda)\) \Comment{\(V(P)=(s=v_0,v_1,\ldots,v_k=t)\)}
		\State \(F\gets \{\delta_G^+(v_i)\setminus\{(v_i,v_{i+1})\}\mid i = 0,1,\ldots,k-2\} \cup \{(v_{k-1},v_k)\}\)
		\State \(G' \gets G\setminus F\)
		\For{\(v \in V\)}
		\State \(C[v] \gets \emptyset\)
		\EndFor
		\State \(C[v_{k-1}] \gets C[v_{k-1}] \cup \{(v_{k-1},v_k)\}\)
		\State \(v \gets v_{k-1}\)
		\State \Return $G'$,$C$,$v$,$P$
		\EndFunction
		\newline
		
		\Function{\texttt{UNSSP}}{$G'$,$C$,$v$,$P$}
		\If{\(v = \emptyset\)}
		\State \Return \Comment{All universal near shortest paths found}
		\EndIf
		\State \(P' \gets\) \textsf{USPP}\((G',\lambda)\) \footnotesize{\Comment{\(V(P')=(s=v'_0,\ldots, v'_i=v,\ldots,v'_l=t)\) or \(V(P')=(.)\)}}\normalsize
		\If{\(P'=(.)\) \algorithmicor \(f_{\lambda}(P')>B\)}
		\State \(C[v] \gets \emptyset\)
		\State \(C[\text{pred}_P(v)]\gets C[\text{pred}_P(v)] \cup \{(\text{pred}_P(v),v)\}\) \Comment{\(\text{pred}_P(s)\coloneqq \emptyset\)}
		\State \(G'\gets G'\cup \delta_{G}^+(v) \cup \delta_{G}^+(\text{pred}_P(v))\setminus C[\text{pred}_P(v)]\)
		\State \(v \gets \text{pred}_P(v)\)
		\State \Call{\texttt{UNSSP}}{$G'$,$C$,$v$,$P$} 
		\Else
		\State print$(P')$
		\State \(C[v'_{l-1}] \gets C[v'_{l-1}]\cup \{(v'_{l-1},v'_l)\}\)
		\State \(F\gets \{\delta_G^+(v'_i)\setminus\{(v'_i,v'_{i+1})\}\mid i = 0,1,\ldots,l-2\} \cup \{(v'_{l-1},v'_l)\}\)
		\State \(G' \gets G'\setminus F\)
		\State \(v \gets v'_{l-1}\)
		\State \Call{\texttt{UNSSP}}{$G'$,$C$,$v$,$P'$}
		\EndIf
		\EndFunction
		\newline
		
		\Function{\texttt{main}}{}
		\State \(G',C,v,P \gets\) \Call{\texttt{Initialization}}{}
		\State \Call{\texttt{UNSSP}}{$G'$,$C$,$v$,$P$}
		\EndFunction
	\end{algorithmic}
\end{algorithm}

\begin{example}
	Figure \ref{fig:1} shows an example graph for which we aim to execute Algorithm \ref{alg:UNSPimproved} with respect to \(\lambda=(0,1,0,0)\) and \(B=3\). The execution steps of the algorithm are summarized in Table \ref{tab:iteration}, whereas the graph modification of every step is shown in Figure \ref{fig:images}. In the initialization phase, we compute the universal shortest path, which is highlighted in Figure \ref{fig:1} and we get \(F=\{(a,c),(b,t)\}\).
	In the first call of \texttt{UNSSP}, we aim again at computing the universal shortest path in the modified graph, which is shown in Figure \ref{fig:2}. Obviously, there is no \(s\)-\(t\)-path. Again, we modify the graph as it is shown in Figure \ref{fig:3}. This procedure is repeated until we get all universal near shortest paths from \(s\) to \(t\), namely \(P^1 = (s,a,b,t)\) and \(P^2 = (s,a,c,b,t)\) with universal objective function values of \(2\) and \(3\), respectively. Note that the paths \(P^3 = (s,a,c,t)\) with an objective value of \(4\), which is found in step \(4\), is not output since \(f_{\lambda}(P^3)>B=3\). Further, note that in step \(2\) another universal near shortest path is found and that we get \(F=\{(a,b),(c,t),(b,t)\}\). Lastly, in step 6, vertex \(v\) gets equal to \(\text{pred}_P(s)\coloneqq \emptyset\) such that the algorithm terminates in the next call.
	\begin{figure}[htb]
		\centering
		\begin{subfigure}{0.25\textwidth}
			\centering
			\begin{tikzpicture}[scale=0.7, every node/.style={scale=0.6}]
			\node[draw, circle,minimum size=0.1cm,inner sep=1pt] (s) at (0.5,10) {$s$};
			\node[draw, circle,minimum size=0.1cm,inner sep=1pt] (a) at (2,10) {$a$};
			\node[draw, circle,minimum size=0.1cm,inner sep=1pt] (b) at (3.5,11) {$b$};
			\node[draw, circle,minimum size=0.1cm,inner sep=1pt] (c) at (3.5,9) {$c$};
			\node[draw, circle,minimum size=0.1cm,inner sep=1pt] (t) at (5,10) {$t$};
			
			\draw[->,ultra thick] (s)--node[above]{$1$}(a);
			\draw [->, ultra thick] (a)--node[above]{$6$}(b);
			\draw [->,thick] (a)--node[below]{$4$}(c);
			\draw [->,thick] (c)--node[left]{$3$}(b);
			\draw [->, ultra thick] (b)--node[above]{$2$}(t);
			\draw [->,thick] (c)--node[below]{$5$}(t);		
			\end{tikzpicture}
			\caption{Initialization}
			\label{fig:1}
		\end{subfigure}\hfil 
		\begin{subfigure}{0.25\textwidth}
			\centering
			\begin{tikzpicture}[scale=0.7, every node/.style={scale=0.6}]
			\node[draw, circle,minimum size=0.1cm,inner sep=1pt] (s) at (0.5,10) {$s$};
			\node[draw, circle,minimum size=0.1cm,inner sep=1pt] (a) at (2,10) {$a$};
			\node[draw, circle,minimum size=0.1cm,inner sep=1pt] (b) at (3.5,11) {$b$};
			\node[draw, circle,minimum size=0.1cm,inner sep=1pt] (c) at (3.5,9) {$c$};
			\node[draw, circle,minimum size=0.1cm,inner sep=1pt] (t) at (5,10) {$t$};
			
			\draw[->,thick] (s)--node[above]{$1$}(a);
			\draw [->,thick] (a)--node[above]{$6$}(b);
			\draw [->,thick] (c)--node[left]{$3$}(b);
			\draw [->,thick] (c)--node[below]{$5$}(t);		
			\end{tikzpicture}
			\caption{Step 1}
			\label{fig:2}
		\end{subfigure}\hfil 
		\begin{subfigure}{0.25\textwidth}
			\centering
			\begin{tikzpicture}[scale=0.7, every node/.style={scale=0.6}]
			\node[draw, circle,minimum size=0.1cm,inner sep=1pt] (s) at (0.5,10) {$s$};
			\node[draw, circle,minimum size=0.1cm,inner sep=1pt] (a) at (2,10) {$a$};
			\node[draw, circle,minimum size=0.1cm,inner sep=1pt] (b) at (3.5,11) {$b$};
			\node[draw, circle,minimum size=0.1cm,inner sep=1pt] (c) at (3.5,9) {$c$};
			\node[draw, circle,minimum size=0.1cm,inner sep=1pt] (t) at (5,10) {$t$};
			
			\draw[->, ultra thick] (s)--node[above]{$1$}(a);
			\draw [->, ultra thick] (a)--node[below]{$4$}(c);
			\draw [->, ultra thick] (c)--node[left]{$3$}(b);
			\draw [->, ultra thick] (b)--node[above]{$2$}(t);
			\draw [->, thick] (c)--node[below]{$5$}(t);		
			\end{tikzpicture}
			\caption{Step 2}
			\label{fig:3}
		\end{subfigure}
		
		\medskip
		\begin{subfigure}{0.25\textwidth}
			\centering
			\begin{tikzpicture}[scale=0.7, every node/.style={scale=0.6}]
			\node[draw, circle,minimum size=0.1cm,inner sep=1pt] (s) at (0.5,10) {$s$};
			\node[draw, circle,minimum size=0.1cm,inner sep=1pt] (a) at (2,10) {$a$};
			\node[draw, circle,minimum size=0.1cm,inner sep=1pt] (b) at (3.5,11) {$b$};
			\node[draw, circle,minimum size=0.1cm,inner sep=1pt] (c) at (3.5,9) {$c$};
			\node[draw, circle,minimum size=0.1cm,inner sep=1pt] (t) at (5,10) {$t$};
			
			\draw[->,thick] (s)--node[above]{$1$}(a);
			\draw [->,thick] (a)--node[below]{$4$}(c);
			\draw [->,thick] (c)--node[left]{$3$}(b);	
			\end{tikzpicture}
			\caption{Step 3}
			\label{fig:4}
		\end{subfigure}\hfil 
		\begin{subfigure}{0.25\textwidth}
			\centering
			\begin{tikzpicture}[scale=0.7, every node/.style={scale=0.6}]
			\node[draw, circle,minimum size=0.1cm,inner sep=1pt] (s) at (0.5,10) {$s$};
			\node[draw, circle,minimum size=0.1cm,inner sep=1pt] (a) at (2,10) {$a$};
			\node[draw, circle,minimum size=0.1cm,inner sep=1pt] (b) at (3.5,11) {$b$};
			\node[draw, circle,minimum size=0.1cm,inner sep=1pt] (c) at (3.5,9) {$c$};
			\node[draw, circle,minimum size=0.1cm,inner sep=1pt] (t) at (5,10) {$t$};
			
			\draw[->, ultra thick] (s)--node[above]{$1$}(a);
			\draw [->, ultra thick] (a)--node[below]{$4$}(c);
			\draw [->,thick] (b)--node[above]{$2$}(t);
			\draw [->, ultra thick] (c)--node[below]{$5$}(t);		
			\end{tikzpicture}
			\caption{Step 4}
			\label{fig:5}
		\end{subfigure}\hfil 
		\begin{subfigure}{0.25\textwidth}
			\centering
			\begin{tikzpicture}[scale=0.7, every node/.style={scale=0.6}]
			\node[draw, circle,minimum size=0.1cm,inner sep=1pt] (s) at (0.5,10) {$s$};
			\node[draw, circle,minimum size=0.1cm,inner sep=1pt] (a) at (2,10) {$a$};
			\node[draw, circle,minimum size=0.1cm,inner sep=1pt] (b) at (3.5,11) {$b$};
			\node[draw, circle,minimum size=0.1cm,inner sep=1pt] (c) at (3.5,9) {$c$};
			\node[draw, circle,minimum size=0.1cm,inner sep=1pt] (t) at (5,10) {$t$};
			
			\draw[->,thick] (s)--node[above]{$1$}(a);
			\draw [->,thick] (c)--node[left]{$3$}(b);
			\draw [->,thick] (b)--node[above]{$2$}(t);
			\draw [->,thick] (c)--node[below]{$5$}(t);		
			\end{tikzpicture}
			\caption{Step 5}
			\label{fig:6}
		\end{subfigure}
		\caption{Graph modification throughout the execution of the algorithm}
		\label{fig:images}
	\end{figure}
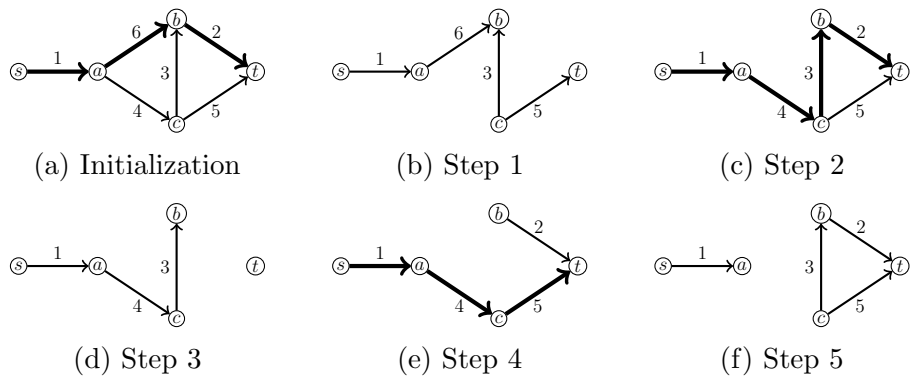
	
	\begin{table}[htbp]
		\centering
		\begin{tabular}{c|c|c|c|c|c|c|c|c} \hline
			Step & \(P\) & \(f_{\lambda}(P)\) &\(v\) & \(C[s]\) & \(C[a]\) & \(C[b]\) & \(C[c]\) & \(C[t]\)\\ \hline 
			Init & \((s,a,b,t)\) & \(2\) & \(b\) & - & - & \(\{(b,t)\}\) & - & -\\ \hline
			\(1\) & \((.)\) & - & \(a\) & - & \(\{(a,b)\}\) & - & - & - \\ \hline
			\(2\) & \((s,a,c,b,t)\) & \(3\) & \(b\) & - & \(\{(a,b)\}\) & \(\{(b,t)\}\) & - & - \\ \hline
			\(3\) & \((.)\)	& - & \(c\) & - & \(\{(a,b)\}\) & - & \(\{(c,b)\}\) & - \\	\hline
			\(4\) & \((s,a,c,t)\) & \(4\) & \(a\) & - & \(\{(a,b),\) & - & - & - \\ 
			&&&&&\((a,c)\}\)&&\\ \hline
			\(5\) & \((.)\) & - & \(s\) & \(\{(s,a)\}\) & - & - & - & - 	\\
			\hline 
		\end{tabular}
		\caption{Execution of Algorithm \ref{alg:UNSPimproved}}
		\label{tab:iteration}
	\end{table}
	
\end{example}

\begin{theorem}
	Algorithm \ref{alg:UNSPimproved} correctly computes the set of all universal near shortest simple paths between source \(s\) and sink \(t\).
\end{theorem}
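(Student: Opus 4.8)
The plan is to establish two properties — \emph{soundness} (every path the algorithm prints is a simple $s$-$t$-path of $G$ with universal value at most $B$) and \emph{completeness} (every simple $s$-$t$-path $Q$ with $f_{\lambda}(Q)\le B$ is printed, and exactly once) — together with termination. Soundness is the routine direction: apart from the initial reference path $P$ produced in \texttt{Initialization} (for which $f_{\lambda}(P)=f^{*}_{\lambda}(s,t)\le B$ and which is reported there), every printed path is the return value $P'$ of some call $\textsf{USPP}(G',\lambda)$ with $G'=G\setminus F$; since $G'$ is a subgraph of $G$ with the same vertex set, $P'$ is a simple $s$-$t$-path of $G$, the value $f_{\lambda}(P')$ is the same whether computed in $G'$ or in $G$, and $P'$ is printed only after the test $f_{\lambda}(P')\le B$ succeeds.

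For completeness I would recast the algorithm as the Lawler/Yen recursive partitioning scheme and verify that this is exactly what the code maintains. Given a reference path $P=(v_{0},\dots,v_{k})$, define for $0\le i\le k-1$ the cell $D_{i}(P)\coloneqq\{Q\in\mathcal{P}_{st}\ :\ v_{0},\dots,v_{i}\text{ are the first }i+1\text{ vertices of }Q\text{ and }Q\text{ leaves }v_{i}\text{ by an arc}\neq(v_{i},v_{i+1})\}$; assigning to each $Q\neq P$ the largest $i$ for which $v_{0},\dots,v_{i}$ is a prefix of $Q$ shows that $\{P\}\cup D_{0}(P)\cup\dots\cup D_{k-1}(P)$ partitions $\mathcal{P}_{st}$. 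The key invariant, to be proved by induction along the recursion, is that at every call $\texttt{UNSSP}(G',C,v_{j},P)$ the simple $s$-$t$-paths of $G'$ are precisely the $Q\in\mathcal{P}_{st}$ having $v_{0},\dots,v_{j}$ as a prefix and leaving $v_{j}$ by an arc not in $C[v_{j}]$, while $C[v_{j}]$ collects exactly those out-arcs of $v_{j}$ through which all near-shortest deviations have already been printed, and the set of paths printed so far is exactly the set of near-shortest paths whose deviation index relative to $P$ is larger than $j$, or equals $j$ with a closed leaving arc. Granting the invariant, $\textsf{USPP}(G',\lambda)$ returns a minimizer of $f_{\lambda}$ over the current cell (or reports it empty), so the \textbf{if}-branch is taken exactly when the cell contains no near-shortest path and pruning it loses nothing; the \textbf{else}-branch prints the cell optimum $P'$ and recurses on $P'$, and — because $P'$ coincides with $P$ up to its deviation vertex — the subtree rooted at that call enumerates all remaining near-shortest members of the current cell and, through backtracking, of the cells $D_{j-1}(P),\dots,D_{0}(P)$, before returning. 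Iterating this down the backtracking chain until $v=\text{pred}_{P}(s)=\emptyset$, every cell of the refinement is visited and each near-shortest path is printed exactly once — at the unique point at which it is the optimum of its cell.

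The routine but delicate part, and the step I expect to be the main obstacle, is verifying that the three places that modify $G'$ and $C$ preserve the invariant: the initialization of $G'$ and of $C[v_{k-1}]$; the restoration $G'\gets G'\cup\delta_{G}^{+}(v)\cup\delta_{G}^{+}(\text{pred}_{P}(v))\setminus C[\text{pred}_{P}(v)]$ together with the update of $C[\text{pred}_{P}(v)]$ in the \textbf{if}-branch; and $G'\gets G'\setminus F$ together with the update of $C[v'_{l-1}]$ in the \textbf{else}-branch. One must in particular handle the case in which a newly found path $P'$ re-joins the previous reference path at a vertex already processed, so that $C[v]$ correctly \emph{accumulates} all out-arcs of $v$ whose sub-cells have been fully enumerated and so that precisely the right arcs are deleted from, and later returned to, $G'$; a careful case analysis of the recursion stack should settle this. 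Termination then follows because each execution of the \textbf{else}-branch prints a path that deviates from its reference by an arc not in the relevant $C[\cdot]$ and hence is new (so there are at most $|\mathcal{P}_{st}|$ such executions), while between consecutive \textbf{else}-branches at most $n$ \textbf{if}-branch calls occur, since each advances $v$ one step toward $s$ along a fixed reference path and the chain ends at $\text{pred}_{P}(s)=\emptyset$.
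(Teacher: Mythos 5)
Your overall strategy is sound and reaches the right conclusion, but it is packaged quite differently from the paper's argument. The paper proves completeness by contradiction: it takes a near shortest path $P$ that is allegedly missed, looks at the printed paths sharing the longest common prefix $v_0,\ldots,v_i$ with $P$, and observes (citing the graph-restoration lines of the algorithm) that once the algorithm has backtracked to $v_i$ one has $\delta^+_{G'}(v_j)=\{(v_j,v_{j+1})\}$ for $j<i$ while $\delta^+_{G'}(v_i)$ excludes exactly the deviating arcs already recorded in $C[v_i]$; hence a later \textsf{USPP} call must return a path using $(v_i,v_{i+1})$, contradicting maximality of $i$. You instead run a direct induction, recasting the procedure as a Lawler/Yen-style recursive partition into deviation cells $D_i(P)$ and maintaining an explicit invariant describing what $G'$ and $C$ encode at each call; you also make soundness and termination explicit, which the paper dispatches in one sentence each. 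Your framing is more systematic and makes the ``printed exactly once'' claim and the correctness of the pruning in the \textbf{if}-branch transparent, at the price of a heavier invariant; the paper's contradiction argument is shorter but leaves the bookkeeping implicit. Be aware, though, that the step you defer --- verifying that the three modification sites for $G'$ and $C$ preserve your invariant, in particular when a new path $P'$ rejoins and then departs from the old reference path --- is not a routine afterthought: it is precisely the content of the paper's observation about $\delta^+_{G'}(v_j)$ and $C[v_i]$, i.e.\ the mathematical core of the proof, so a complete write-up must actually carry out that case analysis rather than assert it. (A minor mismatch: as literally written, \texttt{Initialization} returns the universal shortest path without printing it, so your soundness claim that it ``is reported there'' relies on the evident intent of the pseudocode rather than its text.)
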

\begin{proof}
	Due to construction, every path that is output by the algorithm is a universal near shortest path. We have to show that every path \(P\) with \(f_{\lambda}(P)\leq B\) is found by the algorithm. Therefore, assume that there exists a path \(P=(s=v_0,v_1,\ldots,v_k=t)\) with \(f_{\lambda}(P)\leq B\), which has not been found after termination of Algorithm \ref{alg:UNSPimproved}. Let \(\mathcal{P}^L := \{P^l \mid P^l = (s=v^l_0,\ldots,v^l_i,\ldots,v^l_{k(l)}=t), l = 1,\ldots,L\}\) be the set of all paths that have been found by the algorithm satisfying that \(v_j=v^l_j\) for all \(j=0,1,\ldots,i\) and \(i\) being maximal. Note that \(L\) refers to the number of paths in \(\mathcal{P}^L\). Without loss of generality we assume that the paths in \(\mathcal{P}^L\) are sorted in the order the algorithm prints them.
	Further, note that in the iteration when \(v=v^L_{i+1}\) and all paths \(P\in \mathcal{P}^L\) have been found, it follows that \((v_i,v^l_{i+1}) \in C[v_i]\) for all \(l=1,\ldots,L\). Furthermore, by lines 16 and 17 of the algorithm it follows that \(\delta^+_{G'}(v_j) = \{(v_j,v_{j+1})\}\) for all \(j = 0,1,\ldots,i-1\) and \(\delta^+_{G'}(v_i) = \delta^+_G(v_i)\setminus\{(v^l_i,v^l_{i+1})\mid l=1,\ldots,L\}\). Due to the algorithm, the subsequent recursion steps search for paths \(P'\) with \(P_{sv_i}\subset P'\), where \(P_{sv_i}\) refers to the \(s\)-\(v_i\)-subpath of \(P\). It follows that at one point there will be a path \(P^*\) with \((v_i,v_{i+1}) \in A(P^*)\), which is a contradiction to \(i\) being maximal.
\end{proof}

\begin{theorem}\label{thm:polyI}
	The amount of work per path enumerated in Algorithm \ref{alg:UNSPimproved} for \(\lambda \in \mathbb{Z}_+^{n-1}\) is in \(\mathcal{O}(nT)\), where \(T\) denotes the time for solving \textsf{USPP\((G,\lambda)\)}.
\end{theorem}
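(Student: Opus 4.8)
The plan is to bound by $O(n)$ the number of recursive calls of \texttt{UNSSP} that the algorithm makes before its first output and between two consecutive outputs, to bound the cost of a single call by $O(T)$, and to multiply the two; this gives $O(nT)$ work per path enumerated.

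I would first classify every call \texttt{UNSSP}$(G',C,v,P)$ other than the terminating one (the one with $v=\emptyset$) as either a \emph{print call} — the \texttt{else} branch, which outputs a path $P'$ and then sets $v\gets v'_{l-1}$ — or a \emph{backtrack call} — the \texttt{if} branch, which sets $v\gets\text{pred}_P(v)$. Since \texttt{UNSSP} invokes itself only as its last statement in either branch, the whole execution is a single chain of such calls. As a potential I track the depth $i$ of the current vertex $v$ along the current path $P$, with $\text{depth}(s)=0$ and $\text{depth}(\emptyset)=-1$. Invoking the structural invariant established in the correctness proof of Algorithm~\ref{alg:UNSPimproved} — namely that $\delta^+_{G'}(v_j)=\{(v_j,v_{j+1})\}$ for all $j<i$, so that every $s$-$t$-path in $G'$ follows $P$ up to $v$ — the path $P'$ returned by \textsf{USPP}$(G',\lambda)$ in a print call satisfies $v=v'_i$ with $i\le l(P')-1$. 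Hence a print call moves the depth from $i$ to $l(P')-1\ge i$, never decreasing it, and the resulting depth is at most $n-2$ because the simple path $P'$ has at most $n-1$ arcs. A backtrack call decreases the depth by exactly one, and \texttt{Initialization} sets it to $k-1\le n-2$.

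From this the count follows: at most $n-1$ calls occur between two consecutive print calls, and likewise before the first output. Indeed, right after a print call or after \texttt{Initialization} the depth is some $d\le n-2$; every subsequent non-print call is a backtrack and strictly lowers the depth; and as soon as the depth would drop below $0$ the algorithm reaches $v=\emptyset$ and halts without further output. So at most $n-2$ backtrack calls can precede the next print call, i.e.\ at most $n-1$ calls per enumerated path (plus the single \textsf{USPP} call inside \texttt{Initialization}). Each call of \texttt{UNSSP} runs in $O(T)$ time: it solves one \textsf{USPP} instance on the subgraph $G'$, which is $O(T)$, while the remaining work — forming the forbidden-arc set $F$, the updates $G'\gets G'\setminus F$ and $G'\gets G'\cup\delta_G^+(v)\cup\delta_G^+(\text{pred}_P(v))\setminus C[\text{pred}_P(v)]$, the updates of the lists $C[\cdot]$, and printing $P'$ — inspects only $O(m+n)$ arcs and vertices (the out-degrees summed over the distinct vertices of one simple path total at most $m$), which is absorbed into the $O(T)$ term for every problem of interest, e.g.\ those in Table~\ref{tab:universal}, for which $T=\Omega(m)$. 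Multiplying the $O(n)$ calls per path by the $O(T)$ cost per call yields $O(nT)$. The hypothesis $\lambda\in\mathbb{Z}_+^{n-1}$ (with $c\ge 0$) guarantees $f_\lambda\ge 0$, so that the set of universal near shortest simple paths, and hence the phrase \enquote{per path enumerated}, is well defined.

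The main obstacle is the monotonicity step of the second paragraph: that a print call never decreases $\text{depth}(v)$ and that the depth immediately after a print is at most $n-2$. Both rely on the invariant that $G'$ forces each $s$-$t$-path to follow the current path up to $v$, i.e.\ $\delta^+_{G'}(v_j)=\{(v_j,v_{j+1})\}$ for $j<i$ — exactly the property proved when establishing correctness of Algorithm~\ref{alg:UNSPimproved}, which I would reuse rather than re-derive. Everything else is a routine amortization, paralleling the proof of Theorem~\ref{thm:poly} but replacing its $O(m)$ arc scan with a backtracking chain of length $O(n)$.
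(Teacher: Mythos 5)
Your proof is correct and follows essentially the same route as the paper's: bound the number of recursive calls between consecutive outputs by $\mathcal{O}(n)$ (backtracking along a simple path of at most $n$ vertices), charge $\mathcal{O}(T)$ per call for the \textsf{USPP} solve, and absorb the $\mathcal{O}(m)$ graph/list bookkeeping into $T$. Your depth-potential formalization and the explicit appeal to the invariant $\delta^+_{G'}(v_j)=\{(v_j,v_{j+1})\}$ make the amortization more rigorous than the paper's informal \enquote{in the worst case, we have to go back to $s$}, but the underlying argument is the same.
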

\begin{proof}
	After execution of \textsf{Initialization}, we get the universal shortest path (which is also a universal near shortest path) in time \(\mathcal{O}(m+T)\). Then, we start the backtracking procedure by recursively calling \textsf{UNSSP}.
	Since each universal near shortest simple path uses at most \(n\) vertices, \(\mathcal{O}(nT)\) work is involved for solving universal shortest path problems before the next path is generated (in the worst case, we have to go back to \(s\)). Besides solving universal shortest path problems, graph \(G\) and list \(C\) have to be modified, which is in \(\mathcal{O}(m)\). Since we have to do these modifications at most \(n\) times, the amount of work is in \(\mathcal{O}(nm)\), which can assumed to be in \(\mathcal{O}(nT)\) and can thus be neglected. Since this argument holds true for all subsequently enumerated paths,  the amount of work per path enumerated is in \(\mathcal{O}(nT)\).	
\end{proof}

\begin{remark}\label{remark:alg}
	By Theorems \ref{thm:poly} and \ref{thm:polyI}, we know that Algorithm \ref{alg:UNSPimproved} has a better running time complexity per path enumerated than Algorithm \ref{alg:UNSP}. Nevertheless, Algorithm \ref{alg:UNSP} has a few advantages over Algorithm \ref{alg:UNSPimproved}:
	\begin{itemize}
		\item it is easier to implement,
		\item it is better in terms of space complexity.
	\end{itemize}
\end{remark}

Next, we show that in case of \textsf{UNSSPP(\(G,\lambda\))}, we do not have to consider negative universal weight vectors, i.e., \(\lambda_i \leq 0\) for all \(i = 1,\ldots,n-1\). 
The following lemma can be found in \citep{turner2011universalPHD}.
\begin{lemma}
	Let \(\lambda_i \geq 0\) for all \(i = 1,\ldots,n-1\), let \(P, P' \in \mathcal{P}_{st}\) be two  \(s\)-\(t\)-paths and let \(c_{(i)}(P)\leq c_{(i)}(P')\) for all \(i=1,\ldots,n-1\). Then, it holds that
	\begin{equation*}
	f_{\lambda}(P)\leq f_{\lambda}(P').
	\end{equation*}
\end{lemma}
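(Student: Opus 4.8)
The plan is to prove the inequality term by term, directly from the definition of $f_{\lambda}$. First I would recall that, by the definition of the extended sorted cost vector, $c_{(i)}(P)$ and $c_{(i)}(P')$ are nonnegative for every index $i \in \{1,\ldots,n-1\}$ (they are either genuine arc costs in $\mathbb{Z}_+$ or padding zeros), so the quantities being compared live in $\mathbb{Z}_+$.

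Next, fix an index $i \in \{1,\ldots,n-1\}$. The hypothesis gives $c_{(i)}(P) \le c_{(i)}(P')$, and since $\lambda_i \ge 0$, multiplying both sides by $\lambda_i$ preserves the direction of the inequality, yielding $\lambda_i\, c_{(i)}(P) \le \lambda_i\, c_{(i)}(P')$. This is the one and only place where the sign hypothesis on $\lambda$ is used: if some $\lambda_i$ were negative, multiplying would reverse that coordinate's contribution and the conclusion could fail.

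Finally I would sum these $n-1$ inequalities, one for each coordinate, to obtain
\begin{equation*}
f_{\lambda}(P) = \sum_{i=1}^{n-1} \lambda_i\, c_{(i)}(P) \;\le\; \sum_{i=1}^{n-1} \lambda_i\, c_{(i)}(P') = f_{\lambda}(P'),
\end{equation*}
which is exactly the claim. I do not anticipate any real obstacle here: the statement is a routine monotonicity property of a nonnegative-weighted sum, and the argument is a one-line coordinatewise comparison followed by summation. The only thing to be careful about is to invoke nonnegativity of $\lambda_i$ explicitly when passing from the coordinatewise cost inequality to the coordinatewise weighted inequality.
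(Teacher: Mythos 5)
Your proof is correct and complete: the coordinatewise multiplication by $\lambda_i \ge 0$ followed by summation is exactly the right (and essentially the only) argument, and you correctly isolate where nonnegativity of $\lambda$ is used. Note that the paper itself gives no proof of this lemma, instead citing Turner's dissertation, so there is nothing to compare against; your argument fills that gap in the standard way.
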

The following corollary follows immediately.
\begin{corollary}
	Let \(\lambda_i \leq 0\) for all \(i = 1,\ldots,n-1\), let \(P, P' \in \mathcal{P}_{st}\) be two  \(s\)-\(t\)-paths and let \(c_{(i)}(P)\leq c_{(i)}(P')\) for all \(i=1,\ldots,n-1\). Then, it holds that
	\begin{equation*}
	f_{\lambda}(P)\geq f_{\lambda}(P').
	\end{equation*}
\end{corollary}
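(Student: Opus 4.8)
The plan is to reduce the statement directly to the preceding lemma by a sign flip of the universal weight vector. Concretely, I would set $\mu \coloneqq -\lambda$, i.e.\ $\mu_i \coloneqq -\lambda_i$ for all $i = 1,\ldots,n-1$. Since $\lambda_i \leq 0$ by hypothesis, we have $\mu_i \geq 0$ for all $i$, so $\mu$ is a nonnegative universal weight vector and the lemma applies to it.

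Next I would observe the elementary identity $f_{\mu}(P) = \sum_{i=1}^{n-1} \mu_i c_{(i)}(P) = -\sum_{i=1}^{n-1} \lambda_i c_{(i)}(P) = -f_{\lambda}(P)$, and likewise $f_{\mu}(P') = -f_{\lambda}(P')$. Now apply the lemma to $\mu$, $P$, and $P'$: the hypothesis $c_{(i)}(P) \leq c_{(i)}(P')$ for all $i$ is exactly the one required, and it yields $f_{\mu}(P) \leq f_{\mu}(P')$. Substituting the identity gives $-f_{\lambda}(P) \leq -f_{\lambda}(P')$, hence $f_{\lambda}(P) \geq f_{\lambda}(P')$, which is the claim.

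There is essentially no obstacle here: the only thing to be careful about is that the extended sorted cost vectors $c_{\geq}(P)$ and $c_{\geq}(P')$ both live in $\mathbb{Z}_+^{n-1}$ (same length $n-1$, with zero-padding for the shorter path), so the componentwise comparison $c_{(i)}(P) \leq c_{(i)}(P')$ and the sums $\sum_{i=1}^{n-1}$ are well defined without any case distinction on $l(P)$ versus $l(P')$; this is already guaranteed by the definition of the extended sorted cost vector. Hence the corollary follows immediately, as claimed.
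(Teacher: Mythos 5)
Your proof is correct and matches the paper's intent: the paper gives no explicit argument beyond stating that the corollary ``follows immediately'' from the preceding lemma, and your sign-flip reduction $\mu \coloneqq -\lambda$ together with the identity $f_{\mu}(P) = -f_{\lambda}(P)$ is precisely the immediate derivation meant. The remark about the extended sorted cost vectors both living in $\mathbb{Z}_+^{n-1}$ is a nice touch but not strictly needed, since the lemma already presupposes that setup.
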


\begin{theorem}
	Let \(\lambda_i \leq 0\) for all \(i = 1,\ldots,n-1\) with at least one strict inequality, let \(P^*\) be optimal for \textsf{USPP(\(\lambda\))} and let \(\varepsilon > 0\). Then, there is no path \(P \in \mathcal{P}_{st}\) with \(f_{\lambda}(P)\leq B = (1+\varepsilon)\cdot f_{\lambda}(P^*)\).
\end{theorem}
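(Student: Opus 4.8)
The plan is to exploit the sign reversal that occurs when a strictly negative number is scaled by a factor larger than one. First I would record the elementary sign facts. Since every arc cost is a positive integer, every entry $c_{(i)}(P)$ of the extended sorted cost vector of a path $P$ is nonnegative; and since $\lambda_i \le 0$ for all $i$, each summand $\lambda_i c_{(i)}(P)$ of $f_{\lambda}(P)=\sum_{i=1}^{n-1}\lambda_i c_{(i)}(P)$ is nonpositive. Hence $f_{\lambda}(P)\le 0$ for every $P\in\mathcal{P}_{st}$, and in particular $f_{\lambda}(P^*)\le 0$.

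The crucial strengthening is $f_{\lambda}(P^*)<0$. Here I would combine the fact that $P^*$ \emph{minimizes} $f_{\lambda}$ with the assumption that some coordinate, say $\lambda_j$, is strictly negative and that arc costs are at least $1$: if some $s$-$t$ path $P$ has at least $j$ arcs, then $c_{(j)}(P)\ge 1$, so $f_{\lambda}(P)\le \lambda_j c_{(j)}(P)\le \lambda_j<0$, and optimality of $P^*$ gives $f_{\lambda}(P^*)\le f_{\lambda}(P)<0$. (In the degenerate situation where no $s$-$t$ path is long enough to reach a strictly negative coordinate of $\lambda$ one has $f_{\lambda}(P^*)=0$; I would dismiss this case explicitly, since it is the only place where the argument needs a word of care.)

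With $f_{\lambda}(P^*)<0$ in hand the rest is one line of arithmetic: because $\varepsilon>0$ and $f_{\lambda}(P^*)<0$ we have $\varepsilon\cdot f_{\lambda}(P^*)<0$, hence
\[
B=(1+\varepsilon)\cdot f_{\lambda}(P^*)=f_{\lambda}(P^*)+\varepsilon\cdot f_{\lambda}(P^*)<f_{\lambda}(P^*).
\]
Finally, for an arbitrary $P\in\mathcal{P}_{st}$, optimality of $P^*$ for \textsf{USPP(\(\lambda\))} yields $f_{\lambda}(P)\ge f_{\lambda}(P^*)>B$, so no path $P\in\mathcal{P}_{st}$ satisfies $f_{\lambda}(P)\le B$, which is precisely the claim.

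I expect the only genuine subtlety to be the passage from $f_{\lambda}(P^*)\le 0$ to $f_{\lambda}(P^*)<0$; everything else is sign-bookkeeping and the single inequality above. Conceptually the statement just says that for a nonpositive weight vector the ``$(1+\varepsilon)$-relaxation'' pushes the acceptance threshold \emph{below} the optimum rather than above it, so the set of universal near shortest simple paths becomes empty — which is exactly the justification for restricting \textsf{UNSSPP} to nonnegative weight vectors $\lambda$.
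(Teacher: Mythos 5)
Your proof is correct and follows essentially the same route as the paper's: both reduce the claim to the single inequality \(B=(1+\varepsilon)\cdot f_{\lambda}(P^*)<f_{\lambda}(P^*)\) and then invoke optimality of \(P^*\). The one difference is that the paper simply asserts this inequality ``due to \(\varepsilon>0\) and \(\lambda_i\leq 0\) with at least one strict inequality,'' whereas you explicitly supply the missing ingredient \(f_{\lambda}(P^*)<0\) and correctly flag the degenerate case \(f_{\lambda}(P^*)=0\) (where the strictly negative coordinate of \(\lambda\) meets only zero entries of the extended sorted cost vector), in which the stated conclusion actually fails since \(P^*\) itself would satisfy \(f_{\lambda}(P^*)\leq B=0\); the paper's proof silently passes over this.
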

\begin{proof}
	Let \(P^*\) be optimal for \textsf{USPP(\(\lambda\))}. Then, it holds that \(f_{\lambda}(P^*) \leq f_{\lambda}(P) \text{ for all } P \in \mathcal{P}_{st}\). It follows that \(B = (1+\varepsilon)\cdot f_{\lambda}(P^*) < f_{\lambda}(P^*)\) due to \(\varepsilon > 0\) and \(\lambda_i \leq 0\) for all \(i = 1,\ldots,n-1\) with at least one strict inequality. Consequently, it holds that there is no path \(P \in \mathcal{P}_{st}\) with \(f_{\lambda}(P) < B = (1+\varepsilon)\cdot f_{\lambda}(P^*)\).
\end{proof}
Obviously, for the case of \(\varepsilon = 0\) and \(\lambda_i\leq 0\) for all \(i \in \{1,\ldots,n\}\) with at least one strict inequality, the path \(P^*\) optimal for \textsf{USPP(\(G,\lambda\))} fullfills the inequality.
Consequently, in the case of the universal near shortest simple paths problem, we focus on non-negative universal weight vectors \(\lambda \in \mathbb{Z}_+^{n-1}\).

\begin{theorem}
	\textsf{UNSSPP(\(G,\lambda\))} is intractable for any \(\lambda \in \mathbb{Z}^{n-1}\), i.e., the number of universal near shortest simple paths might be exponential in the number of vertices.
\end{theorem}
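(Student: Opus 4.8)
The plan is to exhibit, for infinitely many values of $n$, a single instance on $n$ vertices whose set of universal near shortest simple $s$-$t$-paths has size $2^{\Theta(n)}$, and to do so by a construction that works \emph{uniformly} in $\lambda$.

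First I would build a ``chain of diamonds''. Fix $k\ge 1$ and let $V \coloneqq \{v_0,v_1,\ldots,v_k\}\cup\{a_1,b_1,\ldots,a_k,b_k\}$ with $s\coloneqq v_0$ and $t\coloneqq v_k$. For each $j\in\{1,\ldots,k\}$ add the four arcs $(v_{j-1},a_j),(a_j,v_j),(v_{j-1},b_j),(b_j,v_j)$, and put $c(e)=1$ for every arc $e$. Then $G=(V,A)$ is a simple digraph with $n=|V|=3k+1$. Every simple $s$-$t$-path traverses the gadgets in the order $j=1,\ldots,k$ and in gadget $j$ passes through exactly one of $a_j$ or $b_j$; conversely every such sequence of binary choices yields a distinct simple $s$-$t$-path. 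Hence $|\mathcal{P}_{st}|=2^k$ and each $P\in\mathcal{P}_{st}$ uses exactly $r\coloneqq 2k$ arcs, all of cost $1$.

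Next I would compute the universal objective on this instance. Since $r=2k\le 3k=n-1$, the definition of the extended sorted cost vector gives $c_{(i)}(P)=1$ for $i=1,\ldots,r$ and $c_{(i)}(P)=0$ for $i=r+1,\ldots,n-1$, so that
\[
 f_\lambda(P)=\sum_{i=1}^{n-1}\lambda_i c_{(i)}(P)=\sum_{i=1}^{2k}\lambda_i
\]
for \emph{every} $P\in\mathcal{P}_{st}$; in particular $f_\lambda$ is constant on $\mathcal{P}_{st}$, regardless of $\lambda\in\mathbb{Z}^{n-1}$. Consequently every path is optimal for \textsf{USPP(\(G,\lambda\))}, $f^*_\lambda(s,t)=\sum_{i=1}^{2k}\lambda_i$, and for $\varepsilon=0$ (as well as for any $\varepsilon\ge 0$ when $f^*_\lambda(s,t)\ge 0$, which covers in particular the relevant case $\lambda\in\mathbb{Z}_+^{n-1}$) the bound $B=(1+\varepsilon)\cdot f^*_\lambda(s,t)$ satisfies $f_\lambda(P)\le B$ for all $P\in\mathcal{P}_{st}$. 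Thus all $2^k$ simple $s$-$t$-paths are universal near shortest simple paths, and since $k=(n-1)/3$ their number is $2^{(n-1)/3}$, exponential in $n$.

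The main obstacle is making the argument uniform in $\lambda$. A naive length-based construction (e.g.\ diamonds whose two branches have equal total cost but different arc counts) yields paths of equal sum-cost but, for a general $\lambda$, the value $f_\lambda(P)$ is governed by the sorted cost vector, not the sum, so the paths would no longer be tied. Forcing every $s$-$t$-path to use the same number of arcs and giving all arcs the same cost makes the sorted cost vectors — hence the values $f_\lambda(P)$ — literally identical across all paths, which neutralizes the dependence on $\lambda$. A secondary, minor point is the sign regime: for $\varepsilon>0$ and $\lambda$ with non-positive entries and negative weight sum there are no near shortest paths at all (as shown above), so there the statement is read with $\varepsilon=0$, under which the exponential lower bound still holds through the optimal paths.
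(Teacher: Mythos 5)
Your proposal is correct and uses essentially the same construction as the paper: a chain of $k=(n-1)/3$ unit-cost diamonds in which every simple $s$-$t$-path has the same number of arcs, hence the same extended sorted cost vector and the same value $f_\lambda$ for every $\lambda$, giving $2^{(n-1)/3}$ near shortest paths. Your explicit remark on the sign regime (reading $\varepsilon=0$ when $\sum_i\lambda_i<0$) is in fact slightly more careful than the paper, which asserts the bound for all $\varepsilon\ge 0$ without that caveat.
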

\begin{proof}
	We construct an instance, where the number of universal near shortest simple paths with respect to a given \(\varepsilon > 0\) is exponential in the number of vertices. Therefore, let \(G=(V,A)\) denote a directed graph with
	\begin{align*}
	&V=\{s=v_1,\ldots,v_n=t\}\\
	&A=\{(v_i,v_{i+1}), i = 1,4,7,\ldots,n-3\}\cup
	\{(v_i,v_{i+2}), i = 1,4,7,\ldots,n-3\}\cup\\
	&\{(v_i,v_{i+2}), i = 2,5,8,\ldots,n-2\}\cup
	\{(v_i,v_{i+1}), i = 3,6,9,\ldots,n-1\},
	\end{align*}
	where \(n-1\) is divisible by \(3\), i.e., \(n-1 \equiv 0 \mod 3\). There are \(\frac{4n-4}{3}\) arcs in the graph, if we construct the instance as described. 
	Let \(\lambda \in \mathbb{Z}^{n-1}\) be given and let \(f^*_{\lambda}(s,t) = \mathsf{OPT}\) be the universal optimal objective function value. By construction it holds that \(f^*_{\lambda}(P) = \mathsf{OPT}\) for all \(P \in \mathcal{P}_{st}\) and that \(l(P) = \frac{2n-2}{3}\) for all \(P \in \mathcal{P}_{st}\). Further, every \(s\)-\(t\)-paths is simple.
	\begin{figure}[h!]
		\centering
		\begin{tikzpicture}[scale=0.5]
		\node[draw, circle,inner sep=2pt,label={[left]\(v_1\)}] (s) at (0,10) {};
		\node[draw, circle,inner sep=2pt,label={[above]\(v_2\)}] (a) at (3,12) {};
		\node[draw, circle,inner sep=2pt,label={[below,yshift=-0.3cm]\(v_3\)}] (b) at (3,8) {};
		\node[draw, circle,inner sep=2pt,label={[above]\(v_4\)}] (c) at (6,10) {};
		\node[draw, circle,inner sep=2pt,label={[above]\(v_5\)}] (d) at (9,12) {};
		\node[draw, circle,inner sep=2pt,label={[below,yshift=-0.3cm]\(v_6\)}] (e) at (9,8) {};
		\node[draw, circle,inner sep=2pt,label={[right]\(v_7\)}] (f) at (12,10) {};
		\node[] (dots) at (14,10) {$ \dots $};
		\node[draw, circle,inner sep=2pt,label={[left]\(v_{n-3}\)}] (g) at (17,10) {};
		\node[draw, circle,inner sep=2pt,label={[above]\(v_{n-2}\)}] (h) at (20,12) {};
		\node[draw, circle,inner sep=2pt,label={[below,yshift=-0.3cm]\(v_{n-1}\)}] (i) at (20,8) {};
		\node[draw, circle,inner sep=2pt,label={[right]\(v_n\)}] (t) at (23,10) {};
		
		\draw[->,thick] (s)--node[above]{\(1\)}(a);
		\draw [->,thick] (a)--node[above]{\(1\)}(c);
		\draw [->,thick] (s)--node[below]{\(1\)}(b);
		\draw [->,thick] (b)--node[below]{\(1\)}(c);
		\draw [->,thick] (c)--node[above]{\(1\)}(d);
		\draw [->,thick] (c)--node[below]{\(1\)}(e);
		\draw [->,thick] (d)--node[above]{\(1\)}(f);
		\draw [->,thick] (e)--node[below]{\(1\)}(f);
		\draw [->,thick] (g)--node[above]{\(1\)}(h);
		\draw [->,thick] (g)--node[below]{\(1\)}(i);	
		\draw [->,thick] (h)--node[above]{\(1\)}(t);
		\draw [->,thick] (i)--node[below]{\(1\)}(t);		
		\end{tikzpicture}
		\caption{Intractability of the Universal Near Shortest Simple Paths Problem}
		\label{fig:intractable}
	\end{figure}
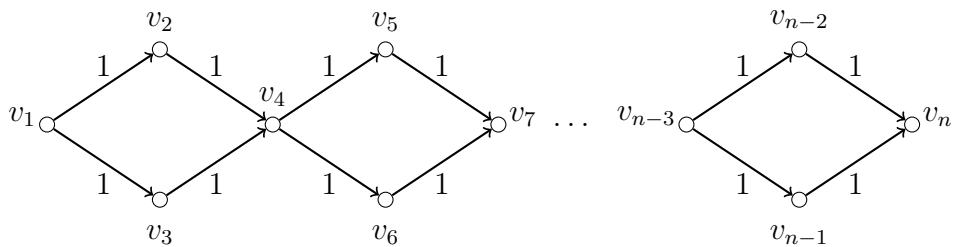
	It is easy to see, that for all \(\varepsilon \geq 0\), we have to enumerate all paths such that we get \(2^{\frac{n-1}{3}}\) universal near shortest paths from \(s\) to \(t\), which concludes the proof.
\end{proof}

\section{Cardinality of a Minimal Complete Set}\label{sec:IPFormulations}
The classical near shortest simple paths problem, i.e., \(\lambda = (1,\ldots,1)\), can iteratively be solved as an integer linear program, see optimization problem~\ref{opt:1}. Constraints (\ref{constra}) denote the flow conservation constraints, which together with the binary constraints (\ref{constrd}) ensure that a feasible solution \(x\) is an \(s\)-\(t\)-path. Further, constraints (\ref{constrc}) guarantee that an \(s\)-\(t\)-path is indeed a near shortest simple path, where we set initially \(\xi = f_{\lambda}^*(s,t) = d(s,t)\) to the universal optimal objective function value from \(s\) to \(t\). 

Throughout the algorithm, see Algorithm \ref{alg:NSPIPxi}, \(\xi\) is updated depending on the value of the near shortest path found in the previous iteration.  The updating procedure is repeated until \(\xi\) is equal to \(\lfloor B\rfloor + 1\), since any path with value greater than \(\lfloor B\rfloor\) is no near shortest path.

 We call this problem \textsf{NSPIP(\(\xi\))}.
\begin{mini!}
	{}{\sum_{(u,v) \in A}c_{uv}\cdot x_{uv}}{\label{opt:1}}{\label{obj1}}	
	\addConstraint{\sum_{(u,v) \in A} x_{uv} - \sum_{(v,u) \in A}x_{vu}}{= \begin{cases}
			1, \text{ if } u=s\\
			0, \text{ if } u\neq s,t\\
			-1, \text{ if } u=t
	\end{cases}}{\label{constra}}
	\addConstraint{\sum_{(u,v) \in A} c_{uv}\cdot x_{uv}}{\geq  \xi}{\label{constrc}}
	\addConstraint{x_{uv}}{\in \{0,1\} }{\forall (u,v) \in A \label{constrd}}
\end{mini!}
With \textsf{OPT\((\xi)\)}, we denote the optimal objective function value of \textsf{NSPIP(\(\xi\))}.

\begin{definition}[Minimal complete set]
	Let \(P_1, P_2 \in \mathcal{P}_{st}\), let \(\lambda \in \mathbb{Z}^{n-1}\) and let \(B = (1+\varepsilon)\cdot f^*_{\lambda}(s,t)\) for some \(\varepsilon > 0\). Further, let \(P^{NSP}(s,t) \coloneqq \{P \in \mathcal{P}_{st} \mid f_{\lambda}(P) \leq B\}\) be the set of all universal near shortest simple paths. We say that \(P_1\) is equivalent to \(P_2\) if and only if \(f_{\lambda}(P_1) = f_{\lambda}(P_2)\). A minimal complete set \(P^{NSP}_{min}(s,t) \subseteq P^{NSP}(s,t)\) is a set of universal near shortest simple paths such that all \(P \in P^{NSP}(s,t)\backslash P^{NSP}_{min}(s,t)\) are equivalent to exactly one \(P' \in P^{NSP}_{min}(s,t)\).
\end{definition}
By iteratively solving \textsf{NSPIP(\(\xi\))}, we get at most \((\lfloor B-d(s,t)\rfloor)\)-many universal near shortest simple paths for  \(\lambda = (1,\ldots,1)\), where  \((\lfloor B-d(s,t)\rfloor)~=~\lfloor \varepsilon\cdot d(s,t)\rfloor\). These paths denote a minimal complete set \(P^{NSP}_{min}(s,t)\). 

Algorithm \ref{alg:NSPIPxi} shows the procedure to compute a minimal complete set \(P^{NSP}_{min}(s,t)\). 
\begin{algorithm}
	\caption{Near Shortest Simple Paths Algorithm -- Minimal complete set}\label{alg:NSPIPxi}
	\begin{algorithmic}[1]
		\Require{A digraph \(G=(V,A),\ c: A \rightarrow \mathbb{Z}_+\), source \(s\), sink \(t\), the universal optimal objective function value from \(s\) to \(t\), i.e.,  \(\textsf{OPT\((0)\)} = d(s,t)\), \(B = (1+\varepsilon)\cdot d(s,t),\ \varepsilon > 0\)}
		\Ensure{A minimal complete set \(P^{NSP}_{min}(s,t)\)}
		\newline
		\State \(P^{NSP}_{min}(s,t) = \{\textsf{OPT(\(0\))}\}\)
		\State \(\xi \gets \textsf{OPT}(0)+1\)
		\While{\(\xi \neq \lfloor B\rfloor + 1\)}
		\State Solve \textsf{(NSPIP(\(\xi\)))}
		\If{\(\textsf{OPT\((\xi)\)}\leq B\)}
		\State \(P^{NSP}_{min}(s,t) \gets P^{NSP}_{min}(s,t) \cup \{\textsf{OPT\((\xi)\)}\}\)
		\State \(\xi \gets \textsf{OPT}(\xi)+1\)
		\Else \State \algorithmicbreak
		\EndIf
		\EndWhile\\
		
		\Return \(P^{NSP}_{min}(s,t)\)
	\end{algorithmic}
\end{algorithm}

In contrast to the classical near shortest simple paths problem, the universal near shortest simple paths problem, i.e., \textsf{UNSSPP(\(\lambda\))}, for an arbitrary \(\lambda \in \mathbb{Z}^{n-1}\) cannot directly be solved as an integer linear program, since constraints (\ref{constr6}) and objective function (\ref{obj2}) are non-linear, cf. \citep{turner2011universal}.
For general \(\lambda \in \mathbb{Z}^{n-1}\), binary sorting variables \(s_{i,uv}\) with
\begin{equation*}
s_{i,uv} = \begin{cases}
1, \footnotesize{\text{if } (u,v) \text{ is at position i of the corresponding extended sorted cost vector}}\\
0, \footnotesize{\text{else}}
\end{cases}
\end{equation*}
have to be introduced, which
together with constraints (\ref{constr5}) ensure that the arc costs are sorted correctly. Constraints (\ref{constr1}) and (\ref{constr7}) coincide with the respective constraints of \textsf{NSPIP(\(\xi\))}, whereas constraints (\ref{constr2}) denote the subtour elimination constraints. We call this problem \textsf{UNSPIP(\(\xi\))}. 

\begin{maxi!}
	{}{\sum_{i=1}^{n-1}\lambda_i\sum_{(u,v) \in A}s_{i,uv}\cdot c_{uv}\cdot x_{uv}}{}{\label{obj2}}	
	\addConstraint{\sum_{(u,v) \in A} x_{uv} - \sum_{(v,u) \in A}x_{vu}}{= \begin{cases}
			1, \text{ if } u=s\\
			0, \text{ if } u\neq s,t\\
			-1, \text{ if } u=t
	\end{cases}}{\label{constr1}}
	\addConstraint{\sum_{u \in S}\sum_{v \in S}x_{uv}}{\leq |S|-1}{\forall S \subseteq V, |S|\geq 2 \label{constr2}}
	\addConstraint{\sum_{i=1}^{n-1}s_{i,uv}}{=1}{\forall (u,v) \in A \label{constr3}}
	\addConstraint{\sum_{(u,v)\in A}s_{i,uv}}{=1}{\forall i=1,\ldots,n-1 \label{constr4}}
	\addConstraint{\sum_{(u,v) \in A}s_{i,uv}\cdot c_{uv}\cdot x_{uv}}{\geq \sum_{(u,v) \in A}s_{i+1,uv}\cdot c_{uv}\cdot x_{uv}\quad}{\forall i=1,\ldots,n-1 \label{constr5}}
	\addConstraint{\sum_{i=1}^{n-1}\lambda_i\sum_{(u,v) \in A}s_{i,uv}\cdot c_{uv}\cdot x_{uv}}{\leq f^*_{\lambda}(s,t) + \xi}{\label{constr6}}
	\addConstraint{s_{i,uv}}{\in \{0,1 \} }{\forall i=1,\ldots,n-1, (u,v)\in A}
	\addConstraint{x_{uv}}{\in \{0,1\} }{\forall (u,v) \in A\label{constr7}}
\end{maxi!}

We can linearize the problem by replacing constraints \ref{constr6} with the following:
\begin{align*}
&y_{i,uv} \leq s_{i,uv}  &\forall i = 1,\ldots,n-1, (u,v) \in A\\
&y_{i,uv} \leq x_{uv}  &\forall i = 1,\ldots,n-1, (u,v) \in A\\
&s_{i,uv} + x_{uv} - 1 \leq y_{i,uv}  &\forall i = 1,\ldots,n-1, (u,v) \in A\\
&\sum_{i = 1}^{n-1}\lambda_i\sum_{(u,v) \in A}c_{uv}\cdot y_{i,uv} \leq f^*_{\lambda}(s,t)+\xi\\
&y_{i,uv} \in \{0,1\}  &\forall i = 1,\ldots,n-1, (u,v) \in A
\end{align*}
which ensures \(y_{i,uv}\coloneqq s_{i,uv}\cdot x_{uv}\).

Objective function (\ref{obj2}) is replaced by
\begin{align*}
\sum_{i=1}^{n-1}\lambda_i\sum_{(u,v) \in A}c_{uv}\cdot y_{i,uv}.
\end{align*}

With \textsf{UOPT\((\xi)\)}, we denote the optimal objective function value of \textsf{UNSPIP(\(\xi\))} for \(\xi \in \{1,\ldots,\lfloor B-f^*_{\lambda}(s,t)\rfloor\}\).
Again, by solving this problem for all \(\xi = 1,\ldots,\lfloor B-f^*_{\lambda}(s,t)\rfloor\), we get at most \((\lfloor B-f^*_{\lambda}(s,t)\rfloor)\)-many universal near shortest simple paths for  \(\lambda \in \mathbb{Z}^{n-1}\). As described above, these paths denote a minimal complete set \(P^{NSP}_{min}(s,t)\). This can be done analogously to Algorithm~\ref{alg:NSPIPxi}.

\begin{remark}
	Note that in case of the universal near shortest simple paths problem, we need the subtour elimination constraints (\ref{constr2}) for the case of negative values for \(\lambda\). Again, if we assume \(\lambda_i\) to be positive for all \(i~=~1,\ldots,n-1\), \textsf{UNSPIP(\(\xi\))} can be formulated in an easier manner similar to optimization problem~\ref{opt:1}.
\end{remark}

Next, we investigate the cardinality of a minimal complete set with respect to \(\lambda \in \mathbb{Z}^{n-1}\). First, we show the most trivial case.
\begin{theorem}
	Let \(\lambda_i \leq 0\) for all \(i = 1,\ldots,n-1\) with at least one strict inequality and let \(B = (1+\varepsilon)\cdot f^*_{\lambda}(s,t)\) for some \(\varepsilon > 0\). Then, the cardinality of the minimal complete set \(P^{NSP}_{min}(s,t)\) of \textsf{UNSSPP(\(G,\lambda\))} is zero, i.e., \(|P^{NSP}_{min}(s,t)|=0\).
\end{theorem}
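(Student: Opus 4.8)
The plan is to read this off directly from the theorem proved just above, which asserts that under exactly these hypotheses on $\lambda$ and $\varepsilon$ there is no path $P \in \mathcal{P}_{st}$ with $f_{\lambda}(P) \le B$. Concretely, I would first recall that $P^{NSP}(s,t) = \{P \in \mathcal{P}_{st} \mid f_{\lambda}(P) \le B\}$ by definition, and that a minimal complete set always satisfies $P^{NSP}_{min}(s,t) \subseteq P^{NSP}(s,t)$. It therefore suffices to show that $P^{NSP}(s,t) = \emptyset$.

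For that I would simply invoke the earlier theorem: since $\lambda_i \le 0$ for all $i = 1,\ldots,n-1$ with at least one strict inequality and $\varepsilon > 0$, that theorem guarantees $f_{\lambda}(P) > B$ for every $P \in \mathcal{P}_{st}$, i.e.\ $P^{NSP}(s,t) = \emptyset$. Combined with the inclusion above this gives $P^{NSP}_{min}(s,t) = \emptyset$, hence $|P^{NSP}_{min}(s,t)| = 0$. (The case $\mathcal{P}_{st} = \emptyset$ is trivial and handled identically.)

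I expect no substantive obstacle; the statement is essentially a restatement of the earlier theorem in the vocabulary of minimal complete sets. The only delicate point, which is already handled in that proof, is that $\varepsilon > 0$ together with $f^*_{\lambda}(s,t) < 0$ — itself a consequence of $\lambda$ having a strictly negative component and of the arc costs being nonnegative — yields the \emph{strict} inequality $B = (1+\varepsilon)\cdot f^*_{\lambda}(s,t) < f^*_{\lambda}(s,t)$; without strictness the optimal path itself could still attain the bound, so the sign condition on $\lambda$ and the assumption $\varepsilon>0$ are both genuinely used here. Once this is in place, the inclusion $P^{NSP}_{min}(s,t) \subseteq P^{NSP}(s,t) = \emptyset$ finishes the argument with no further computation.
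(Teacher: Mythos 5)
Your proposal is correct and follows essentially the same route as the paper: the paper's own proof is a one-line contradiction argument that implicitly uses exactly the fact you cite from the earlier theorem, namely that $B = (1+\varepsilon)\cdot f^*_{\lambda}(s,t) < f^*_{\lambda}(s,t)$ under these sign conditions, so that $P^{NSP}(s,t) = \emptyset$. The only remark worth making is that your claim $f^*_{\lambda}(s,t) < 0$ (shared implicitly by the paper) silently assumes the strictly negative component of $\lambda$ multiplies a strictly positive sorted cost, i.e.\ its index does not exceed $l(P^*)$; otherwise $f^*_{\lambda}(s,t)$ could equal $0$ and the strict inequality $B < f^*_{\lambda}(s,t)$ would fail — but this is a defect of the original statement, not of your argument relative to it.
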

\begin{proof}
	Assume \(|P^{NSP}_{min}(s,t)|>0\). Then, there exists a path \(P' \in \mathcal{P}_{st}\) with \(f_{\lambda}(P') < f^*_{\lambda}(s,t)\), which is a contradiction to \(f^*_{\lambda}(s,t)\) being optimal for \textsf{USPP(\(G,\lambda\))}.
\end{proof}
Consequently, we investigate the case of \(\lambda_i\geq 0\) for all \(i \in\{1,\ldots,n-1\}\).

\begin{theorem}
	Let \(\lambda_i > 0\) for some \(i \in \{1,\ldots,n-1\}\) and \(\lambda_j = 0\) for all \(j \in \{1,\ldots,n-1\}, j\neq i\) and let \(B = (1+\varepsilon)\cdot f^*_{\lambda}(s,t)\) for some \(\varepsilon > 0\). Then, the cardinality of the minimal complete set \(P^{NSP}_{min}(s,t)\) of \textsf{UNSSPP(\(G,\lambda\))} is smaller or equal than the number of arcs in \(G\), i.e., \(|P^{NSP}_{min}(s,t)|\leq |A| = m\).
\end{theorem}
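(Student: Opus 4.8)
The plan is to reduce the statement to a counting argument on the values of the objective. Since $\lambda_j = 0$ for all $j \neq i$, we have $f_\lambda(P) = \lambda_i\, c_{(i)}(P)$ for every path $P$, so the objective depends only on the $i$-th largest arc cost appearing on $P$. By the definition of a minimal complete set, $|P^{NSP}_{min}(s,t)|$ equals the number of equivalence classes of $\sim$ on $P^{NSP}(s,t)$, i.e. the number of distinct values of $f_\lambda$ attained on $P^{NSP}(s,t)$; and since $\lambda_i \neq 0$, the map $v \mapsto \lambda_i v$ is injective, so this equals the number of distinct values of $c_{(i)}(P)$ as $P$ ranges over $P^{NSP}(s,t)$, hence is at most the number of distinct values of $c_{(i)}(P)$ over all of $\mathcal{P}_{st}$.

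Next I would observe that for any simple path $P$, by the definition of the extended sorted cost vector, $c_{(i)}(P)$ is either equal to $c(a)$ for some arc $a \in A(P) \subseteq A$ (when $l(P) \geq i$) or equal to $0$ (when $l(P) < i$, due to the padding zeros). This gives $\{c_{(i)}(P) : P \in \mathcal{P}_{st}\} \subseteq \{0\} \cup \{c(a) : a \in A\}$, a set of at most $m+1$ elements — which is not yet the claimed bound. The only real subtlety, and the part I expect to be the main obstacle, is to eliminate this spurious ``$+1$'' coming from the value $0$.

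To handle it I would split into two cases. If some $P_0 \in \mathcal{P}_{st}$ satisfies $l(P_0) < i$, then $f_\lambda(P_0) = \lambda_i \cdot 0 = 0$; since $\lambda_i > 0$ and $c \geq 0$ force $f_\lambda(P) \geq 0$ for every path, this means $f^*_\lambda(s,t) = 0$ and hence $B = (1+\varepsilon)\cdot 0 = 0$. Then every $P \in P^{NSP}(s,t)$ satisfies $0 \leq f_\lambda(P) \leq B = 0$, so $P^{NSP}(s,t)$ is a single equivalence class and $|P^{NSP}_{min}(s,t)| \leq 1 \leq m$ (using $m \geq 1$, which holds whenever $\mathcal{P}_{st} \neq \emptyset$; if $\mathcal{P}_{st} = \emptyset$ the cardinality is $0 \le m$). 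In the complementary case every $P \in \mathcal{P}_{st}$ has $l(P) \geq i$, so $\{c_{(i)}(P) : P \in \mathcal{P}_{st}\} \subseteq \{c(a) : a \in A\}$ has at most $m$ elements; combined with the first paragraph this yields $|P^{NSP}_{min}(s,t)| \leq m$. The two cases together prove the claim, and apart from this bookkeeping around short paths the argument is just a pigeonhole on the arc costs.
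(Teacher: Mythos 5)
Your proof is correct and takes essentially the same route as the paper's one\hyp{}line argument: since \(f_{\lambda}(P) = \lambda_i\, c_{(i)}(P)\), the number of equivalence classes is bounded by the number of distinct values of \(c_{(i)}(P)\), which is in turn bounded by the \(m\) arc costs. The extra case analysis you give for paths with \(l(P) < i\) (where the padding zero of the extended sorted cost vector could contribute an \((m+1)\)-st value) addresses a detail the paper's proof silently skips, and your resolution of that case --- \(f^*_{\lambda}(s,t)=0\) forces \(B=0\) and a single equivalence class --- is correct.
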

\begin{proof}
	This follows immediately from the fact that there are at most \(|A| = m\) many distinct arc cost in \(G\), i.e., \(c(a_1)\neq c(a_2)\) for all \(a_1,a_2 \in A, a_1\neq a_2\) and, thus, there are at most \(m\) many unique universal objective function values. 
\end{proof}

\begin{theorem}\label{thm:intractable}
	Let \(\lambda = (1,\ldots,1)\) and let \(B = (1+\varepsilon)\cdot f^*_{\lambda}(s,t)\) for some \(\varepsilon > 0\). Then, the cardinality of the minimal complete set \(P^{NSP}_{min}(s,t)\) of \textsf{UNSSPP(\(G,\lambda\))} is intractable, i.e., the cardinality of the minimal complete set might be exponential in the number of vertices.
\end{theorem}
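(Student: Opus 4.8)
The plan is to reuse a weighted variant of the diamond-chain gadget that already appeared in the intractability proof for \textsf{UNSSPP(\(G,\lambda\))}, but now to choose the arc costs so that the \(2^{(n-1)/3}\) distinct simple \(s\)-\(t\)-paths realize \(2^{(n-1)/3}\) \emph{pairwise distinct} values of \(f_{\lambda}\), each of which lies below \(B\) for a suitable choice of \(\varepsilon\). Since a minimal complete set \(P^{NSP}_{min}(s,t)\) contains exactly one representative per attained value of \(f_{\lambda}\), this forces \(|P^{NSP}_{min}(s,t)|\) to be exponential in \(n\).

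Concretely, I would take the same vertex set \(V=\{s=v_1,\ldots,v_n=t\}\) and block structure as in Figure \ref{fig:intractable}, with \(n-1\equiv 0 \bmod 3\) and \(k:=(n-1)/3\) blocks, so that block \(j\) routes \(v_{3j-2}\) to \(v_{3j+1}\) either through the \enquote{upper} vertex \(v_{3j-1}\) or through the \enquote{lower} vertex \(v_{3j}\), giving \(2^{k}\) directed \(s\)-\(t\)-paths, all simple (the vertex indices along any path are strictly increasing). Then I would assign costs as follows: for each block \(j\in\{1,\ldots,k\}\) the two upper arcs cost \(1\) each, while of the two lower arcs one costs \(1\) and the other costs \(1+2^{j-1}\). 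Writing \(S\subseteq\{1,\ldots,k\}\) for the set of blocks in which a path \(P\) takes the lower route, we then have \(f_{\lambda}(P)=c(P)=2k+\sum_{j\in S}2^{j-1}\). Since distinct subsets have distinct binary sums, the \(2^{k}\) paths attain precisely the \(2^{k}\) distinct values \(2k,2k+1,\ldots,2k+2^{k}-1\), the all-upper path being the unique universal shortest path, so \(f^*_{\lambda}(s,t)=2k>0\).

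Next I would fix \(\varepsilon\), as part of the instance, large enough that every path is near shortest: choosing \(\varepsilon=2^{k}\) yields \(B=(1+\varepsilon)\cdot 2k\in\mathbb{Z}_+\) with \(B\geq 2k+2^{k}-1\), hence \(P^{NSP}(s,t)=\mathcal{P}_{st}\) and all \(2^{k}\) attained values are at most \(B\). Consequently a minimal complete set must contain one path per value, i.e.\ \(|P^{NSP}_{min}(s,t)|=2^{k}=2^{(n-1)/3}\), which is exponential in the number of vertices. If one wishes to keep \(\varepsilon\) bounded, replacing each upper-arc cost \(1\) by \(2^{k}\) and the lower-arc costs \(1,\,1+2^{j-1}\) by \(2^{k},\,2^{k}+2^{j-1}\) makes the analogous instance work already for \(\varepsilon=1\), at the cost of arc weights of size \(\mathcal{O}(2^{k})\), which is still polynomial in the input length.

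The only points needing care — and all are routine — are: ensuring \(f^*_{\lambda}(s,t)\) is strictly positive (otherwise \(B=(1+\varepsilon)\cdot f^*_{\lambda}(s,t)=0\) could never dominate the larger path values), keeping all arc costs in \(\mathbb{Z}_+\) and \(B\) integral, and verifying that the \(2^{k}\) subset sums \(\sum_{j\in S}2^{j-1}\) are distinct, which is just uniqueness of binary representations. The gadget itself is essentially the one already used for the intractability theorem, so no genuinely new construction difficulty arises; the new ingredient is simply the power-of-two weighting that spreads the path values apart while the underlying graph keeps exponentially many simple \(s\)-\(t\)-paths.
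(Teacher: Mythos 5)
Your proposal is correct and follows essentially the same approach as the paper: both use a chain of binary choices with powers-of-two cost increments so that the exponentially many simple \(s\)-\(t\)-paths realize pairwise distinct subset-sum values, all below a suitably chosen \(B\). The only cosmetic difference is that the paper places two parallel arcs of costs \(0\) and \(2^{i}\) between consecutive vertices (yielding \(2^{n-2}\) distinct values), whereas you reuse the three-vertex diamond blocks of Figure~\ref{fig:intractable} to avoid parallel arcs (yielding \(2^{(n-1)/3}\) distinct values); both counts are exponential and both arguments are sound.
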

\begin{proof}
	We construct an instance, where the cardinality of the minimal complete set with respect to a given \(\varepsilon > 0\) is exponential in the number of vertices, i.e., there are exponentially many (with respect to \(|V|\)) universal near shortest simple paths with distinct universal objective function values. Therefore, let \(G=(V,A)\) denote a directed graph with
	\begin{align*}
		& V=\{s=v_1,\ldots,v_n=t\},\\
		& A=\{(v_i,v_{i+1}), i = 1,\ldots,n-1\}\cup\{(v_i,v_{i+1})', i = 1,\ldots,n-1\}.
	\end{align*}
	 Let \(\lambda=(1,\ldots,1)\) and let \(\varepsilon = 2^{n-2}-1\), see Figure \ref{fig:intractable1}.
	
	\begin{figure}[h!]
		\centering
		\begin{tikzpicture}[scale=0.5]
		\node[draw, circle,inner sep=2pt,label={[left]\(v_1\)}] (s) at (0,10) {};
		\node[draw, circle,inner sep=2pt,label={[above,yshift=4pt]\(v_2\)}] (a) at (5,10) {};
		\node[draw, circle,inner sep=2pt,label={[above,yshift=4pt]\(v_3\)}] (b) at (10,10) {};
		\node[draw, circle,inner sep=2pt,label={[above,yshift=4pt]\(v_4\)}] (c) at (15,10) {};
		\node[] (dots) at (17,10) {$ \dots $};
		\node[draw, circle,inner sep=2pt,label={[above,yshift=4pt]\(v_{n-1}\)}] (d) at (19,10) {};
		\node[draw, circle,inner sep=2pt,label={[right]\(v_n\)}] (t) at (24,10) {};
		
		\draw [->,thick] (s)edge [bend angle=50, bend right] node[below]{\(1\)}(a);
		\draw [->,thick] (s)edge [bend angle=50, bend left] node[above]{\(1\)}(a);
		\draw [->,thick] (a)edge [bend angle=50, bend right] node[below]{\(0\)}(b);
		\draw [->,thick] (a)edge [bend angle=50, bend left] node[above]{\(2^0\)}(b);
		\draw [->,thick] (b)edge [bend angle=50, bend right] node[below]{\(0\)}(c);
		\draw [->,thick] (b)edge [bend angle=50, bend left] node[above]{\(2^1\)}(c);
		\draw [->,thick] (d)edge [bend angle=50, bend right] node[below]{\(0\)}(t);
		\draw [->,thick] (d)edge [bend angle=50, bend left] node[above]{\(2^{n-3}\)}(t);
		
		\end{tikzpicture}
		\caption{Intractability of the minimal complete set for \(\lambda=(1,\ldots,1)\)}
		\label{fig:intractable1}
	\end{figure}
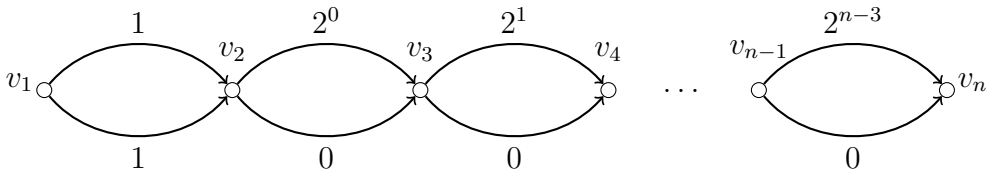
	One can see that the universal shortest path \(P\) follows the lower arcs of \(G\) with universal objective function value \(f_{\lambda}(P) = 1\), whereas the universal longest path \(P'\) follows the upper arcs of \(G\) with universal objective function \(f_{\lambda}(P') = 1+ \sum_{i=0}^{n-3} 2^i = 2^{n-2}\). By construction, it holds that for each \(z \in \{1,\ldots,2^{n-2}\}\) there are two distinct \(v_1\)-\(v_n\)-paths \(P_1\) and \(P_2\) with \(f_{\lambda}(P_i) = z\) for \(i=1,2\). Note that there are \(2^{n-1}\) different \(v_1\)-\(v_n\)-paths. Consequently, there are \(\frac{2^{n-1}}{2} = 2^{n-2}\) paths from \(v_1\) to \(v_n\) with distinct universal objective function values. Thus, it holds that \(|P^{NSP}_{min}(s,t)|=2^{n-2}\), which concludes the proof.
\end{proof}

\begin{corollary}\label{cor:ksum}
	Let $\lambda = (\underbrace{1,\ldots,1}_k,0,\ldots,0)$ with \(k\leq n-1\) and let \(B = (1+\varepsilon)\cdot f^*_{\lambda}(s,t)\) for some \(\varepsilon > 0\). Then, the cardinality of the minimal complete set \(P^{NSP}_{min}(s,t)\) of \textsf{UNSSPP(\(G,\lambda\))} is intractable, i.e., the cardinality of the minimal complete set might be exponential in the number of vertices.
\end{corollary}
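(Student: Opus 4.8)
The plan is to obtain the corollary directly from Theorem~\ref{thm:intractable} by setting $k := n-1$. The point is that for $k = n-1$ the weight vector $\lambda = (\underbrace{1,\ldots,1}_{n-1},0,\ldots,0)$ is exactly the all-ones vector $(1,\ldots,1)\in\mathbb{Z}^{n-1}$, and on simple $s$-$t$-paths the $k$-sum objective and the ordinary sum objective coincide: a simple path $P$ has $l(P)\le n-1$, so its extended sorted cost vector $c_{\ge}(P)$ has at most $l(P)$ nonzero entries and is zero-padded up to length $n-1$, whence $f_{\lambda}(P)=\sum_{i=1}^{n-1}c_{(i)}(P)=\sum_{a\in A(P)}c(a)=c(P)$. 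Thus \textsf{UNSSPP$(G,\lambda)$} for this $\lambda$ is verbatim the problem handled in Theorem~\ref{thm:intractable}.

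Consequently I would reuse the instance from the proof of Theorem~\ref{thm:intractable}: the graph $G$ on $V=\{s=v_1,\ldots,v_n=t\}$ whose stage $i$ consists of two parallel arcs $v_i\to v_{i+1}$ of costs $\{1,1\}$ if $i=1$ and $\{0,2^{i-2}\}$ if $i\ge 2$, together with $\varepsilon = 2^{n-2}-1$. Every $s$-$t$-path in $G$ is simple, uses exactly $n-1$ arcs, and --- writing $S$ for the set of stages on which the ``upper'' arc is taken --- has universal objective value $f_{\lambda}(P)=1+\sum_{i\in S}2^{i-2}$; as $S$ ranges over all subsets of $\{2,\ldots,n-1\}$ these values run through every integer in $\{1,\ldots,2^{n-2}\}$. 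Since $f^*_{\lambda}(s,t)=1$ and hence $B=(1+\varepsilon)\cdot f^*_{\lambda}(s,t)=2^{n-2}$, all of these paths are universal near shortest simple paths and they realize $2^{n-2}$ pairwise distinct objective values; picking one representative per value yields $|P^{NSP}_{min}(s,t)|\ge 2^{n-2}$, which is exponential in $|V|=n$.

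I do not anticipate a real obstacle; the only things to spell out are the identification of the $k$-sum and sum objectives for $k=n-1$ above, and the observation that the argument genuinely needs $k$ to scale with $n$. Indeed, a doubling gadget of just $k$ stages followed by a forced zero-cost path $v_{k+1}\to\cdots\to v_n$ produces $2^{k-1}$ inequivalent near shortest simple paths, which is exponential in the number of vertices precisely when $k=\Theta(n)$; for $k$ fixed the minimal complete set is polynomial (it has at most $\binom{m}{k}$ members, one per attainable $k$-sum value). Hence the statement ``might be exponential in the number of vertices'' is to be understood as the existence of such a family of instances, realized by the choice $k=n-1$.
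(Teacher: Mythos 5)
Your proposal is correct and follows the paper's own route exactly: the paper likewise derives the corollary from Theorem~\ref{thm:intractable} by setting $k:=n-1$, using the same doubling-gadget instance with $\varepsilon=2^{n-2}-1$. Your additional observations --- that for $k=n-1$ the $k$-sum and sum objectives coincide on simple paths, and that the construction genuinely requires $k=\Theta(n)$ --- are correct details the paper leaves implicit.
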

\begin{proof}
	Follows immediately from Theorem \ref{thm:intractable} for \(k:=n-1\).
\end{proof}

\begin{corollary}
	Let $\lambda = (\underbrace{0,\ldots,0}_j,\underbrace{1,\ldots,1}_k,\underbrace{0,\ldots,0}_l)$ with \(j, k, l \in \mathbb{Z}_+ \text{ and } j+k+l=n-1\) and let \(B = (1+\varepsilon)\cdot f^*_{\lambda}(s,t)\) for some \(\varepsilon > 0\). Then, the cardinality of the minimal complete set \(P^{NSP}_{min}(s,t)\) of \textsf{(UNSSPP(\(G,\lambda\)))} is intractable, i.e., the cardinality of the minimal complete set might be exponential in the number of vertices.
\end{corollary}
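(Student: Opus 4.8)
The plan is to obtain this corollary as a direct specialization of Theorem~\ref{thm:intractable}, exactly in the spirit of the proof of Corollary~\ref{cor:ksum}. The statement is purely existential: it asserts only that \(|P^{NSP}_{min}(s,t)|\) \emph{may} grow exponentially in \(n\), so it suffices to produce a single weight vector of the prescribed block shape \(\lambda=(\underbrace{0,\ldots,0}_j,\underbrace{1,\ldots,1}_k,\underbrace{0,\ldots,0}_l)\) together with a family of instances on which the exponential blow-up occurs.

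The one key step is to set \(j=l=0\) and \(k=n-1\). Then the block vector degenerates to \(\lambda=(1,\ldots,1)\), and Theorem~\ref{thm:intractable} already supplies, for every admissible \(n\), a digraph on \(n\) vertices in which every \(s\)-\(t\)-path is simple and in which there are \(2^{n-2}\) paths with pairwise distinct universal objective values, all of them within the bound \(B\) for the \(\varepsilon\) chosen there; hence \(|P^{NSP}_{min}(s,t)|=2^{n-2}\) on that family. Since the paper's convention on \(j,k,l\in\mathbb{Z}_+\) admits \(j=l=0\) (this is precisely the reading used in the analogous \textsf{NextUSP} corollary), the claim follows immediately.

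There is essentially no genuine obstacle; the only thing worth verifying is that the degenerate indices \(j=l=0\) are legitimate under the hypothesis, which is what makes the one-line reduction valid. If one instead wished to keep \(j,l\ge 1\), the argument still goes through after a mild padding of the gadget of Theorem~\ref{thm:intractable}: prepend a chain of \(j\) arcs whose costs strictly exceed every cost in the gadget and append a chain of \(l\) arcs of cost \(0\). In any \(s\)-\(t\)-path the heavy arcs then occupy the top \(j\) (zero-weighted) positions of the extended sorted cost vector and the appended cheap arcs the bottom \(l\) (zero-weighted) positions, so the objective still equals the sum objective of the gadget, and the \(2^{k-1}\) distinct values survive — which is exponential in \(n\) whenever \(k=n-1-j-l\) grows with \(n\). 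The clean case \(j=l=0\) avoids even this bookkeeping, so the corollary follows at once from Theorem~\ref{thm:intractable}.
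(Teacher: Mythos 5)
Your proposal is correct and matches the paper's own argument: the paper likewise obtains this corollary by specializing to \(j=l=0\), \(k=n-1\), so that it reduces to Theorem~\ref{thm:intractable} (via Corollary~\ref{cor:ksum}). The extra padding construction you sketch for \(j,l\ge 1\) is not needed but is a reasonable additional observation.
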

\begin{proof}
	Follows immediately from Theorem \ref{thm:intractable} and specifically as a special case of Corollary~\ref{cor:ksum}.
\end{proof}

As we have seen, for specific values of \(\lambda\) even a minimal complete set might be of exponential size. For these specific values it would be desirable to find a finite representation approximating/representing a minimal complete set of exponentially many universal near shortest simple paths from source to sink satisfying the given bound. This is not possible as we will show in the following. Consequently, we focus on \(\lambda = (1,\ldots,1)\) and $\lambda = (\underbrace{1,\ldots,1}_k,0,\ldots,0)$ with \(k\leq |V|-1\), since for these values even a minimal complete set might be of exponential size, see Theorem \ref{thm:intractable} and Corollary \ref{cor:ksum}.
\newline

Assume we are given \(\varepsilon\) and \(\delta\) with \(\varepsilon, \delta > 0\). Then, let \(U = f^*_{\lambda}(s,t)\) and \(B = (1+\varepsilon)\cdot f^*_{\lambda}(s,t)\). In order to obtain a finite representation, we divide the interval \([U,B]\) into polynomial many subintervals \([(1+\delta)^i\cdot U, (1+\delta)^{i+1}\cdot U)\) with \(i=0,1,\ldots,\left\lceil \frac{\log_2(\frac{B}{U})}{\log_2(1+\delta)}\right\rceil - 2\) and \([(1+\delta)^j\cdot U,B]\) for \(j = \left\lceil \frac{\log_2(\frac{B}{U})}{\log_2(1+\delta)}\right\rceil - 1\). Note that the range of \(i\) follows from the fact that \(U\cdot (1+\delta)^i\) has to be greater or equal to \(B\).
Now, we aim to find for every subinterval exactly one path, if one exists. Therefore, consider the following decision problem, called \textsf{FindPath}.
\newline

\textbf{\textsf{FindPath}\((G,\lambda,\varepsilon, \delta)\)}. Given a directed graph \(G=(V,A)\), \(\lambda \in \mathbb{Z}^{n-1}\), two distinct vertices \(s, t \in V\), values \(\varepsilon, \delta > 0\) and the universal optimal objective function value \(U=f^*_{\lambda}(s,t)\) corresponding to the universal optimal \(s\)-\(t\)-path and let \(B=(1+\varepsilon)\cdot f^*_{\lambda}(s,t)\). Decide whether there exists paths \(P^i \in \mathcal{P}_{st}\) such that \(U\cdot (1+\delta)^i \leq f_{\lambda}(P^i) < U\cdot (1+\delta)^{i+1}\) for all \(i=0,1,\ldots,\left\lceil \frac{\log_2(\frac{B}{U})}{\log_2(1+\delta)}\right\rceil - 2\) and a path \(P^j\) with \(U\cdot (1+\delta)^j \leq f_{\lambda}(P^j) \leq B\) for \(j = \left\lceil \frac{\log_2(\frac{B}{U})}{\log_2(1+\delta)}\right\rceil - 1\).

\begin{figure}[h!]
	\centering
	\begin{tikzpicture}
	\draw[ultra thick] (0,0) -- (13.5,0);
	\draw[ultra thick, -[] (0.595,0) -- (0.6,0);
	\node[] (Variables2) at (0.5,-0.5){\(U\)};
	
	\draw[ultra thick, -[] (11.195,0) -- (11.2,0);
	\draw[ultra thick] (11,0.2) to [bend left=45] (11,-0.2) node[anchor=north] {\((1+\delta)^j\cdot U\)};
	
	\draw[ultra thick, -[] (12.7,0) -- (12.695,0);
	\node[] (Variables2) at (12.7,-0.5){\(B\)};
	
	\draw[ultra thick] (2,0.2) to [bend left=45] (2,-0.2) node[anchor=north] {\((1+\delta)\cdot U\)};
	
	\draw[ultra thick, -[] (2.195,0) -- (2.2,0);
	
	\node[] (Variables) at (3.5,-0.5){\ldots};
	\draw[ultra thick] (5,0.2) to [bend left=45] (5,-0.2) node[anchor=north] {\((1+\delta)^i\cdot U\)};
	
	\draw[ultra thick, -[] (5.195,0) -- (5.2,0);
	
	\draw[ultra thick] (8,0.2) to [bend left=45] (8,-0.2) node[anchor=north] {\((1+\delta)^{i+1}\cdot U\)};
	
	\draw[ultra thick, -[] (8.195,0) -- (8.2,0);
	
	\node[] (Variables1) at (9.5,-0.5){\ldots};
	
	\end{tikzpicture}
	\begin{equation*}
	i=0,1,\ldots,\left\lceil \frac{\log_2(\frac{B}{U})}{\log_2(1+\delta)}\right\rceil - 2, \qquad j = \left\lceil \frac{\log_2(\frac{B}{U})}{\log_2(1+\delta)}\right\rceil -1
	\end{equation*}
	\caption{Illustration of \textsf{FindPath}}
\end{figure}

\begin{theorem}\label{thm:FindPath}
	For \(\lambda = (1,\ldots,1)\), the problem \textsf{FindPath} is \(\mathcal{NP}\)-complete, even for \(c(a) = 1\) for all \(a \in A\).
\end{theorem}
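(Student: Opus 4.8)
The plan is to reduce from the longest \(s\)-\(t\)-path problem in unit-cost digraphs, which is \(\mathcal{NP}\)-complete even with \(c(a)=1\) for all \(a\in A\), cf.\ \citep{garey2002computers}; taking the length bound equal to the number of vertices minus one turns it into the Hamiltonian \(s\)-\(t\)-path problem. The idea is to build a \textsf{FindPath} instance in which all but one subinterval are trivially realizable, while the last subinterval is realizable exactly when a Hamiltonian path exists. First, for membership in \(\mathcal{NP}\): a certificate consists of one \(s\)-\(t\)-path per subinterval; there are only polynomially many subintervals (as observed above), and for each given path one checks in polynomial time that it is simple, runs from \(s\) to \(t\), and has length in the required subinterval.

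For the hardness part, let \(H=(V_H,A_H)\) with unit arc costs, \(|V_H|=n_H\), and two distinct vertices \(s_H,t_H\in V_H\) be given; the question is whether \(H\) has a simple \(s_H\)-\(t_H\)-path with \(n_H-1\) arcs (a Hamiltonian one). I would construct the following \textsf{FindPath} instance with \(\lambda=(1,\dots,1)\) and unit costs. Put
\[
 V \coloneqq V_H \cup \{s,t,g_1,\dots,g_{n_H-1}\}, \qquad
 A \coloneqq A_H \cup \{(s,t),(s,s_H),(t_H,t),(s,g_1)\} \cup \{(g_i,g_{i+1})\}_{i=1}^{n_H-2} \cup \{(g_i,t)\}_{i=1}^{n_H-1},
\]
with \(s,t,g_1,\dots,g_{n_H-1}\) new vertices, all arcs of cost \(1\). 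Set \(\varepsilon \coloneqq n_H+1\) and \(\delta \coloneqq n_H\), so that \(U = f^*_\lambda(s,t) = d(s,t) = 1\) (the arc \((s,t)\)) and \(B = (1+\varepsilon)U = n_H+2\).

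I would then establish three facts about the simple \(s\)-\(t\)-paths of \(G\). (i) For every \(\ell\in\{1,\dots,n_H\}\) there is one of length \(\ell\): the arc \((s,t)\) for \(\ell=1\), and \(s\to g_1\to\dots\to g_{\ell-1}\to t\) for \(2\le\ell\le n_H\). (ii) There is one of length \(n_H+1\) if and only if \(H\) has a Hamiltonian \(s_H\)-\(t_H\)-path: the \enquote{if} is witnessed by \(s\to s_H\), then a Hamiltonian \(s_H\)-\(t_H\)-path in \(H\), then \(t_H\to t\); for \enquote{only if}, observe that from \(s\) one may only move to \(t\), \(g_1\), or \(s_H\), that each \(g_i\) leads only to \(g_{i+1}\) or \(t\), and that once inside \(H\) the sole exit towards \(t\) is \((t_H,t)\) — hence every \(s\)-\(t\)-path is the arc \((s,t)\), or a filler path through the \(g_i\) (length \(\le n_H\)), or of the shape \(s\to s_H\leadsto t_H\to t\) with a simple \(s_H\)-\(t_H\)-path of \(\le n_H-1\) arcs in the middle; length \(n_H+1\) forces the last shape with a middle part of exactly \(n_H-1\) arcs, i.e.\ a Hamiltonian path. (iii) By the same case distinction, \(G\) has no simple \(s\)-\(t\)-path of length \(>n_H+1\), hence none of length \(n_H+2\). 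Next I would compute the subintervals: since \(U=1\), \(1+\delta=n_H+1\), \(B=n_H+2\) and \(n_H+1<n_H+2<(n_H+1)^2\), we get \(\lceil\log_2(B/U)/\log_2(1+\delta)\rceil = \lceil\log_{n_H+1}(n_H+2)\rceil = 2\), so the subintervals are exactly \([1,\,n_H+1)\) and \([n_H+1,\,n_H+2]\). By (i) the first (integer points \(\{1,\dots,n_H\}\)) is always realizable; by (iii) length \(n_H+2\) never occurs, so by (ii) the second (integer points \(\{n_H+1,n_H+2\}\)) is realizable precisely when \(H\) has a Hamiltonian \(s_H\)-\(t_H\)-path. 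Thus \textsf{FindPath}\((G,\lambda,\varepsilon,\delta)\) has answer \enquote{yes} iff \(H\) does, and since \(|V|,|A|,\varepsilon,\delta\) are polynomial in the size of \(H\) and the construction is polynomial-time computable, this finishes the reduction.

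The main obstacle is the bookkeeping behind facts (ii) and (iii): one must make sure that \(G\) has no \enquote{spurious} \(s\)-\(t\)-path — that the filler gadget and the embedded copy of \(H\) cannot be interleaved — and that no \(s\)-\(t\)-path exceeds length \(n_H+1\); this is exactly what pins the only non-trivial length to \(n_H+1\). The remaining delicate point is the parameter tuning: choosing \(\delta=n_H\) so that \(n_H+1=(1+\delta)U\) is exactly the left endpoint of the last subinterval, which guarantees that this subinterval's integer points are \(\{n_H+1,n_H+2\}\) and hence are not covered by the \enquote{cheap} lengths \(\{1,\dots,n_H\}\).
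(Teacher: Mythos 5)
Your reduction is correct, and it rests on the same core idea as the paper's proof: reduce from the directed Hamiltonian \(s\)-\(t\)-path problem and tune \(\varepsilon\) and \(\delta\) so that exactly two subintervals arise, the first of which is always populated while the second contains, among the lengths actually realizable by simple \(s\)-\(t\)-paths, only the Hamiltonian length. The difference lies in how the normalization is achieved. The paper leaves the graph untouched (\(G'=G\)) and instead expresses the parameters in terms of the computed shortest-path distance \(U\), setting \(\varepsilon=\frac{n-1}{U}-1\) and \(\delta=\frac{n-3/2}{U}-1\), which produces the subintervals \([U,\,n-\tfrac{3}{2})\) and \([n-\tfrac{3}{2},\,n-1]\); the only integer in the second one is \(n-1\), and a side remark is needed to justify assuming \(U<n-1\). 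You instead perform graph surgery to force \(U=1\), which buys you fixed integer parameters \(\varepsilon=n_H+1\), \(\delta=n_H\) independent of the input's shortest-path distance, at the price of having to verify that the gadget creates no spurious or overly long \(s\)-\(t\)-paths (your facts (ii) and (iii), which do check out). One small observation: your filler chain \(g_1,\dots,g_{n_H-1}\) is superfluous, since \textsf{FindPath} asks for one path per subinterval rather than one per integer value, and the direct arc \((s,t)\) alone already realizes the first subinterval \([1,\,n_H+1)\).
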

\begin{proof}
	\textsf{FindPath} is clearly in \(\mathcal{NP}\). To show that \textsf{FindPath} is \(\mathcal{NP}\)-complete, we conduct a polynomial time reduction from the directed Hamiltonian Path problem, which is known to be \(\mathcal{NP}\)-complete, cf. \citep{garey2002computers}.
	The reduction is as follows. Given an instance \(G=(V,A)\), a cost function \(c: A \rightarrow \mathbb{Z}_+\) with \(c(a)=1\) for all \(a \in A\) and two distinct vertices \(s, t \in V\) of the directed Hamiltonian Path problem (does there exists a hamiltonian path \(P \in \mathcal{P}_{st}\) in \(G\), i.e., \(c(P) = |V|-1\)), we construct an instance of \textsf{FindPath} as follows:
	Let \(G' = G\). Further, let \(c'(a) = c(a) = 1\) for all \(a \in A\). Moreover, let \(U\) denote the shortest path distance from \(s\) to \(t\) in \(G\), which can be computed in polynomial time using Dijkstra's algorithm. Further, we set \(\varepsilon=\frac{n-1}{U}-1\) and \(\delta=\frac{n-\frac{3}{2}}{U}-1\).
	Note that we obtain two subintervals \(S_1\) and \(S_2\) if we choose \(\varepsilon\) and \(\delta\) as described above, i.e., \(S_1=[U,n-\frac{3}{2})\) and \(S_2 = [n-\frac{3}{2},n-1]\).
	Now, there exists a path \(P\) in \(G'\) with \(c(P) \in S_2\) if and only if there exists a path \(P\) in \(G\) with \(c(P) = |V|-1\). This reduction can be done in polynomial time, which concludes the proof.
\end{proof}

\begin{remark}
	Note that in the proof of Theorem \ref{thm:FindPath}, we can easily find a path \(P\) with \(c(P) \in S_1\). Further,  we implicitly assumed that the shortest path distance in \(G\) is smaller than \(n-1\). This is not a restriction, since in unit-cost graphs where the shortest path distance is equal to \(n-1\), the Hamiltonian Path problem can be solved in polynomial time.
\end{remark}

\begin{corollary}
	For $\lambda = (\underbrace{1,\ldots,1}_k,0,\ldots,0)$ with \(k\leq n -1\), the problem \textsf{FindPath} is \(\mathcal{NP}\)-complete, even for \(c(a) = 1\) for all \(a \in A\).
\end{corollary}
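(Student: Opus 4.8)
The plan is to deduce this corollary from the already-established case \(\lambda = (1,\ldots,1)\), namely Theorem~\ref{thm:FindPath}, by instantiating \(k := n-1\), exactly in the spirit of Corollary~\ref{thm:NextUSPI} and Corollary~\ref{cor:ksum}. The one observation that makes this work is that on unit-cost instances the \((n-1)\)-sum objective collapses to the ordinary path cost: if \(c(a) = 1\) for all \(a \in A\) and \(P \in \mathcal{P}_{st}\) is simple, then \(l(P) \leq n-1\) and the extended sorted cost vector of \(P\) is \(c_{\geq}(P) = (\underbrace{1,\ldots,1}_{l(P)},0,\ldots,0)\), so that
\[
f_{\lambda}(P) \;=\; \sum_{i=1}^{n-1} \lambda_i\, c_{(i)}(P) \;=\; \sum_{i=1}^{l(P)} 1 \;=\; l(P) \;=\; c(P).
\]
Hence, restricted to unit-cost graphs, \textsf{FindPath}\((G,\lambda,\varepsilon,\delta)\) for \(\lambda = (\underbrace{1,\ldots,1}_{n-1})\) is literally the same decision problem as \textsf{FindPath}\((G,(1,\ldots,1),\varepsilon,\delta)\).

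Given this, I would argue in two short steps. First, membership in \(\mathcal{NP}\): a certificate is the tuple of paths \(P^0,\ldots,P^j\), and for each one checks in polynomial time that it is a simple \(s\)-\(t\)-path and that its \(k\)-sum value lies in the prescribed subinterval. Second, hardness: one reuses verbatim the reduction from the directed Hamiltonian Path problem in the proof of Theorem~\ref{thm:FindPath}, namely take \(G' = G\) with unit costs, let \(U\) be the shortest \(s\)-\(t\)-distance, and set \(\varepsilon = \frac{n-1}{U}-1\), \(\delta = \frac{n-\frac{3}{2}}{U}-1\), which produces the two subintervals \(S_1 = [U, n-\frac{3}{2})\) and \(S_2 = [n-\frac{3}{2}, n-1]\). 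By the displayed identity, a path \(P\) satisfies \(f_{\lambda}(P) \in S_2\) if and only if \(c(P) = n-1\), i.e., if and only if \(P\) is a Hamiltonian \(s\)-\(t\)-path, while a path with value in \(S_1\) always exists (the shortest \(s\)-\(t\)-path). The construction is polynomial in the input size, so \(\mathcal{NP}\)-completeness follows.

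I do not expect a genuine obstacle here. The only point that needs care is the identity \(f_{\lambda}(P) = c(P)\) on unit-cost instances, which hinges on restricting attention to simple \(s\)-\(t\)-paths: this guarantees \(l(P) \leq n-1 = k\), so that summing the \(k\) largest (hence \emph{all}) arc costs of \(P\) returns exactly the number of arcs of \(P\), that is \(c(P)\). Everything else, including the choice of \(\varepsilon\) and \(\delta\) and the accompanying observation that shortest \(s\)-\(t\)-distance equal to \(n-1\) already puts Hamiltonian Path in \(\mathcal{P}\), is inherited unchanged from Theorem~\ref{thm:FindPath}; exhibiting the single hardest pattern \(k = n-1\) suffices for the family of weight vectors of this shape.
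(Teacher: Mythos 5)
Your proposal is correct and follows the same route as the paper, which simply notes that the corollary follows from Theorem~\ref{thm:FindPath} by taking \(k := n-1\). Your additional observation that \(f_{\lambda}(P) = c(P)\) for simple paths when \(k = n-1\) is exactly the (implicit) justification for that one-line argument, so you have merely made the paper's reasoning explicit.
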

\begin{proof}
	Follows immediately from Theorem \ref{thm:FindPath} for \(k := n-1\).
\end{proof}

Consequently, there is no polynomial time algorithm that finds a finite representation of the form as described above, unless \(\mathcal{P}=\mathcal{NP}\).

\section{Conclusion}\label{sec:conclusion}
In this paper, we proposed a generalization of the classical near shortest simple paths problem, called the universal near shortest simple paths problem, by introducing a universal weight vector \(\lambda\). We showed that this problem is intractable, i.e., the number of universal near shortest simple paths might be exponential in the number of vertices. We presented two different algorithms, for which we showed that the amount of work per path enumerated is polynomially bounded as long as the underlying universal shortest path problem with respect to \(\lambda\) can be solved in polynomial time. Our fastest algorithm (when applied to the classical near shortest paths problem) has the same running time complexity per path enumerated than the best known algorithm for the near shortest path problem with the addition that it can be applied to almost any shortest path problem.  
Further, we showed how to generate a minimal complete set of (universal) near shortest simple paths with respect to \(\lambda\) and proved the worst-case size of this set. In particular, we have seen that there are values of \(\lambda\), where even a minimal complete set might be of exponential size. For these values we proved that finding a finite selection of paths representing the possible exponentially large set of universal near shortest simple paths is still a hard task and cannot be accomplished in polynomial time.

\section*{Acknowledgments}
	This work was partially supported by the Bundesministerium für Bildung und Forschung (BMBF) under Grant No. 13N14561.





\bibliographystyle{apalike}

%
%
%
\end{document}